\providecommand{\U}[1]{\protect\rule{.1in}{.1in}}
\newtheorem{theorem}{Theorem}[section]
\newtheorem{corollary}[theorem]{Corollary}
\newtheorem{definition}[theorem]{Definition}
\newtheorem{remark}[theorem]{Remark}
\newenvironment{proof}[1][Proof]{\noindent\textbf{#1.} }{\ \rule{0.5em}{0.5em}}
\begin{document}

\title{A Survey of Some of the Recent Developments in Leavitt Path Algebras}
\author{Kulumani M. Rangaswamy\\Department of Mathematics, University of Colorado, \\Colorado Springs, CO. 80918}
\date{}
\maketitle

\section{Introduction}

Leavitt path algebras are algebraic analogues of graph C*-algebras and, ever
since they were introduced in 2004, have become an active area of research.
Many of the initial developments during the 2004 - 2014 period have been
nicely described in the recent book \cite{AAS} and in the excellent survey
article \cite{A}. Our goal in this article is to report on some of the recent
developments in the investigation of the algebraic aspects of Leavitt path
algebras not included in \cite{AAS}, \cite{A}. Because the Leavitt path
algebras grew as algebraic analogues of graph C*-algebras, their initial
investigation involved mostly the ideas and techniques used in the study of
graph C*-algebras such as the graph properties of Conditions (K) and (L), and
the ring properties of being simple, purely infinite simple, prime/primitive
etc. An important \ starting goal in this initial study was to work out the
algebraic analogue of the deep and powerful Kirchberg Phillips theorem to
classify purely infinite simple Leavitt path algebras $L:=L_{K}(E)$ up to
isomorphism or up to Morita equivalence by means the Grothendieck groups
$K_{0}(L)$ and the sign of the determinant $\det(I-A_{E})$ where $A_{E}$ is
the adjacent matrix of the graph $E$. After such initial progress, there has
been an explosion of articles dealing with not only the various different
aspects of Leavitt path algebras, but also many natural generalizations such
as Leavitt path algebras over commutative rings, of separated graphs, of high
rank graphs, Steinberg algebras and groupoids etc. In the background of many
of these investigation is the special feature that every Leavitt path algebra
$L$ is endowed with three mutually compatible structures: $L$ is a
$K$-algebra, $L$ is a $%
%TCIMACRO{\U{2124} }%
%BeginExpansion
\mathbb{Z}
%EndExpansion
$-graded ring and $L$ is a ring with involution $^{\ast}$. Our focus in this
survey is to describe a selection of recent graded and non-graded
ring-theoretic and module-theoretic investigations of Leavitt path algebras.
My apologies to authors whose work has not been included due to the
constrained focus, limitation of time and length of the paper.

In the first part of this survey, we describe graphical conditions \ on $E$
under which the corresponding Leavitt path algebra $L_{K}(E)$ belongs to
well-known classes of rings. The interesting fact is that often a single graph
property of $E$ seems to imply multiple ring properties of $L_{K}(E)$ and
these properties for general rings are usually independent of each other. The
poster child of such a phenomenon is the graph property for a finite graph $E$
that no cycle in $E$ has an exit. In this case $L_{K}(E)$ possesses at least
nine completely different ring properties ! (see Theorem \ref{NC-finiteCase}).
Because of such connections between $E$ and $L_{K}(E)$, Leavitt path algebras
can be effective tools in the construction of examples of rings with various
desired properties. If we do not impose any graphical conditions on $E$ and
just look at $L_{K}(E)$ as a $%
%TCIMACRO{\U{2124} }%
%BeginExpansion
\mathbb{Z}
%EndExpansion
$-graded ring, a really interesting result by Hazrat \cite{H} states that
$L_{K}(E)$ is a graded von Neumann regular ring. Because of this, the graded
one-sided and two-sided ideals of $L_{K}(E)$ possess many desirable properties.

The module theory over Leavitt path algebras is still at an infant stage.
\ The second part of this survey gives an account of some of the recent
advances in this theory. Naturally, the initial investigations focussed on the
simplest of the modules, namely, the simple modules over $L_{K}(E)$. We begin
with outlining a few methods of constructing graded and non-graded simple
left/right $L_{K}(E)$-modules. A special type of simple modules, called Chen
simple modules introduced by Chen \cite{C}, play an important role. This is
followed by characterizing Leavitt path algebras over which all the simple
modules possess some special properties, such as, when all the simple modules
are flat, or injective, or finitely presented or graded etc. For example, very
recently, A.A. Ambily, R. Hazrat and H. Li (\cite{AHL}) have proved that every
simple left/right $L_{K}(E)$-modules is flat if and only if $L_{K}(E)$ is von
Neumann regular, thus showing, in the case of Leavitt path algebras, an open
question in ring theory has an affirmative answer. Likewise, it was shown in
\cite{AMT-1} that $Ext_{L_{K}(E)}^{1}(S,S)\neq0$ for a Chen simple module $S$
induced by a cycle. It can then be shown that if all the simple left
$L_{K}(E)$-modules are injective, then $L_{K}(E)$ is von Neumann regular. The
converse easily holds if $E$ is a finite graph, since it that case $L_{K}(E)$
is semi-simple artinian. In contrast, if $R$ is an arbitrary non-commutative
ring, the injectivity of all simple left $R$- modules need not imply von
Neumann regularity of $R$ (see \cite{CO}). Our next result in this section
describes Leavitt path algebras of finite graphs having finite irreducible
representation type, that is, when there are only finitely many isomorphism
classes of simple modules. Interestingly, this class of algebras turns out to
be precisely the class of Leavitt path algebras of finite graphs having finite
Gelfand-Kirillov dimension.

The last section deals with one-sided ideals of a Leavitt path algebra $L$.
Four years ago it was shown in \cite{R-2} that finitely generated two-sided
ideals of $L$ are principal ideals. Recently, G. Abrams, F. Mantese and
A.Tonolo (\cite{AMT-2}) generalized this by showing that every finitely
generated one-sided ideal of $L$ is a principal ideal. Such rings are called
B\'{e}zout rings. Using a deep theorem of G. Bergman, Ara and Goodearl
(\cite{AG}) showed that one-sided ideals of $L$ are projective. From these two
results, it follows that the sum and the intersection of principal one-sided
ideals of $L$ are again principal. Thus the principal one-sided ideals of $L$
form a sublattice of the lattice of all one-sided ideals of $L$. A well-known
theorem, proved originally for graph C*-algebras and later for Leavitt path
algebras $L_{K}(E),$ states that every two-sided ideal of $L_{K}(E)$ is a
graded ideal if and only if $E$ satisfies Condition (K), equivalently
$L_{K}(E)$ is a weakly regular ring. What happens when every one-sided ideal
of $L_{K}(E)$ is graded? The last theorem of this section answers this
question, namely, every one-sided ideal of $L_{K}(E)$ is graded if and only if
every simple $L_{K}(E)$-module is graded if and only if $L_{K}(E)$ is a von
Neumann regular ring (see \cite{HR}).

In summary, this survey is intended to showcase a small sample of some of the
recent research on the algebraic aspects of Leavitt path algebras. Hopefully,
this provides the reader with some insights into this theory and generates
further interest in this exciting and growing field of algebra.

\ 

\section{Preliminaries}

For the general notation, terminology and results in Leavitt path algebras, we
refer to \cite{AAS} and \cite{A}. We give below an outline of some of the
needed basic concepts and results.

A (directed) graph $E=(E^{0},E^{1},r,s)$ consists of two sets $E^{0}$ and
$E^{1}$ together with maps $r,s:E^{1}\rightarrow E^{0}$. The elements of
$E^{0}$ are called \textit{vertices} and the elements of $E^{1}$
\textit{edges}. \textbf{\ }A vertex $v$ is called a \textit{sink} if it emits
no edges and a vertex $v$ is called a \textit{regular} \textit{vertex} if it
emits a non-empty finite set of edges. An \textit{infinite emitter} is a
vertex which emits infinitely many edges. For each $e\in E^{1}$, we call
$e^{\ast}$ a \textit{ghost} edge. We let $r(e^{\ast})$ denote $s(e)$, and we
let $s(e^{\ast})$ denote $r(e)$. A\textit{\ path} $\mu$ of length $n>0$ is a
finite sequence of edges $\mu=e_{1}e_{2}\cdot\cdot\cdot e_{n}$ with
$r(e_{i})=s(e_{i+1})$ for all $i=1,\cdot\cdot\cdot,n-1$. In this case
$\mu^{\ast}=e_{n}^{\ast}\cdot\cdot\cdot e_{2}^{\ast}e_{1}^{\ast}$ is the
corresponding ghost path. A vertex\ is considered a path of length $0$.

A path $\mu$ $=e_{1}\cdot\cdot\cdot e_{n}$ in $E$ is \textit{closed} if
$r(e_{n})=s(e_{1})$, in which case $\mu$ is said to be \textit{based at the
vertex }$s(e_{1})$. A closed path $\mu$ as above is called \textit{simple}
provided it does not pass through its base more than once, i.e., $s(e_{i})\neq
s(e_{1})$ for all $i=2,...,n$. The closed path $\mu$ is called a
\textit{cycle} if it does not pass through any of its vertices twice, that is,
if $s(e_{i})\neq s(e_{j})$ for every $i\neq j$. An \textit{exit }for a path
$\mu=e_{1}\cdot\cdot\cdot e_{n}$ is an edge $e$ such that $s(e)=s(e_{i})$ for
some $i$ and $e\neq e_{i}$.

For any vertex $v$, the \textit{tree} of $v$ is $T_{E}(v)=\{w\in E^{0}:v\geq
w\}$. We say there is a\textit{\ bifurcation} at a vertex $v$ or $v$ is
a\textit{\ bifurcation vertex}, if $v$ emits more than one edge. In a graph $E
$, a vertex $v$ is called a \textit{line point} if there is no bifurcation or
a cycle based at any vertex in $T_{E}(v)$. Thus, if $v$ is a line point, the
vertices in $T_{E}(v)$ arrange themselves on a straight line path $\mu$
starting at $v$ ($\mu$ could just be $v$) such as $\bullet_{v}\rightarrow
\bullet\cdot\cdot\cdot\bullet\rightarrow\bullet\cdot\cdot\cdot$ which could be
finite or infinite.

If $p$ is an infinite path in $E$, say, $p=e_{1}\cdot\cdot\cdot e_{n}%
e_{n+1}\cdot\cdot\cdot$ \ \ \ , we follow Chen \cite{C} to define, for each
$n\geq1$, $\tau^{\leq n}(p)=e_{1}\cdot\cdot\cdot e_{n}$ \ and $\tau
^{>n}(p)=e_{n+1}e_{n+2}\cdot\cdot\cdot$ \ . Two infinite paths $p,q$ are said
to be \textit{tail-equivalent} if there are positive integers $m,n$ such that
$\tau^{>m}(p)=\tau^{>n}(q)$. This defines an equivalence relation among the
infinite paths in $E$ and the equivalence class containing the path $p$ is
denoted by $[p]$. An infinite path $p$ is said to be a \textit{rational path
}if it is tail-equivalent to an infinite path $q=ccc\cdot\cdot\cdot$ , where
$c$ is a closed path.

Given an arbitrary graph $E$ and a field $K$, the \textit{Leavitt path algebra
}$L_{K}(E)$ is defined to be the $K$-algebra generated by a set $\{v:v\in
E^{0}\}$ of pair-wise orthogonal idempotents together with a set of variables
$\{e,e^{\ast}:e\in E^{1}\}$ which satisfy the following conditions:

(1) \ $s(e)e=e=er(e)$ for all $e\in E^{1}$.

(2) $r(e)e^{\ast}=e^{\ast}=e^{\ast}s(e)$\ for all $e\in E^{1}$.

(3) (The \textquotedblleft CK-1 relations\textquotedblright) For all $e,f\in
E^{1}$, $e^{\ast}e=r(e)$ and $e^{\ast}f=0$ if $e\neq f$.

(4) (The \textquotedblleft CK-2 relations\textquotedblright) For every regular
vertex $v\in E^{0}$,
\[
v=\sum_{e\in E^{1},s(e)=v}ee^{\ast}.
\]

An arbitrary element $a\in L:=L_{K}(E)$ can be written as $a=$ $%
%TCIMACRO{\dsum \limits_{i=1}^{n}}%
%BeginExpansion
{\displaystyle\sum\limits_{i=1}^{n}}
%EndExpansion
k_{i}\alpha_{i}\beta_{i}^{\ast}$ where $\alpha_{i},\beta_{i}$ are paths and
$k_{i}\in K$. Here $r(\alpha_{i})=s(\beta_{i}^{\ast})=r(\beta_{i})$.

Every Leavitt path algebra $L_{K}(E)$ is a $%
%TCIMACRO{\U{2124} }%
%BeginExpansion
\mathbb{Z}
%EndExpansion
$\textit{-graded algebra}, namely, $L_{K}(E)=%
%TCIMACRO{\dbigoplus \limits_{n\in\mathbb{Z}}}%
%BeginExpansion
{\displaystyle\bigoplus\limits_{n\in\mathbb{Z}}}
%EndExpansion
L_{n}$ induced by defining, for all $v\in E^{0}$ and $e\in E^{1}$, $\deg
(v)=0$, $\deg(e)=1$, $\deg(e^{\ast})=-1$. Here the $L_{n}$ are abelian
subgroups satisfying $L_{m}L_{n}\subseteq L_{m+n}$ for all $m,n\in%
%TCIMACRO{\U{2124} }%
%BeginExpansion
\mathbb{Z}
%EndExpansion
$. Further, for each $n\in%
%TCIMACRO{\U{2124} }%
%BeginExpansion
\mathbb{Z}
%EndExpansion
$, the homogeneous component $L_{n}$ is given by $L_{n}=\{%
%TCIMACRO{\tsum }%
%BeginExpansion
{\textstyle\sum}
%EndExpansion
k_{i}\alpha_{i}\beta_{i}^{\ast}\in L:\alpha,\beta\in Path(E),$ $|\alpha
_{i}|-|\beta_{i}|=n\}$. (For details, see Section 2.1 in \cite{AAS}). An ideal
$I$ of $L_{K}(E)$ is said to be a \textit{graded ideal} if $I=$ $%
%TCIMACRO{\dbigoplus \limits_{n\in\mathbb{Z}}}%
%BeginExpansion
{\displaystyle\bigoplus\limits_{n\in\mathbb{Z}}}
%EndExpansion
(I\cap L_{n})$.

Throughout this paper, $E$ will denote an arbitrary graph (with no restriction
on the number of vertices or on the number of edges emitted by each vertex)
and $K$ will denote an arbitrary field. For convenience in notation, we will
denote, most of the times, the Leavitt path algebra $L_{K}(E)$ by $L$.

We shall first recall the definition of the Gelfand-Kirillov dimension of
associative algebras over a field.

Let $A$ be a finitely generated $K$-algebra, generated by a finite dimensional
subspace $V=Ka_{1}\oplus\cdot\cdot\cdot\oplus Ka_{m}$. Let $V^{0}=K$ and, for
each $n\geq1$, let $V^{n}$ denote the $K$-subspace of $A$ spanned by all the
monomials of length $n$ in $a_{1},\cdot\cdot\cdot,a_{m}$. Set $V_{n}=%
%TCIMACRO{\tsum \limits_{i=0}^{n}}%
%BeginExpansion
{\textstyle\sum\limits_{i=0}^{n}}
%EndExpansion
V^{i}$. Then the \textbf{Gelfand-Kirillov dimension} of $A$ ( for short, the
\textbf{GK-dimension} of $A$) is defined by%
\[
\text{GK-dim}(A):=\underset{n\rightarrow\infty}{\lim\sup}\log_{n}(\dim
V_{n}).
\]
It is known that the GK-dim($A$) is independent of the choice of the
generating subspace $V$.

If $A$ is an infinitely generated $K$-algebra, then the GK-dimension of $A$ is
defined as
\[
\text{GK-dim}(A):=\underset{B}{Sup\text{ }}\text{GK-dim}(B)
\]
where $B$ runs over all the finitely generated $K$-subalgebras of $A$.

The GK-dimension of an algebra $A$ measures the growth of the algebra $A$ and
might be considered the non-commutative analogue of the classical Krull
dimension for commutative algebras. Some useful examples (see \cite{KL}) are:
The GK-dimension the matrix ring $M_{\Lambda}(K)$ is $0$ and the GK-dimension
of the matrix ring $M_{\Lambda^{\prime}}(K[x,x^{-1}])$ is $1$, where
$\Lambda,\Lambda^{\prime}$ are arbitrary possibly imnfinite index sets.

\textbf{Grading of a matrix ring over a }$%
%TCIMACRO{\U{2124} }%
%BeginExpansion
\mathbb{Z}
%EndExpansion
$\textbf{-graded ring:} We wish to recall the grading of matrices of finite
order and then indicate how to extend this to the case of infinite matrices in
which at most finitely many entries are non-zero (see \cite{H-1} and
\cite{NvO}). This will be used in Section 4. In the following, the length of a
path $p$ will be denoted by $|p|$.

Let $\Gamma$ be an additive abelian group, $A$ be a $\Gamma$-graded ring and
$(\delta_{1},\cdot\cdot\cdot,\delta_{n})$ an $n$-tuple where $\delta_{i}%
\in\Gamma$. Then $M_{n}(A)$ is a $\Gamma$-graded ring where, for each
$\lambda\in\Gamma$, its $\lambda$-homogeneous component consists of $n\times
n$ matrices%

\[
M_{n}(A)(\delta_{1},\cdot\cdot\cdot,\delta_{n})_{\lambda}=\left(
\begin{array}
[c]{cccccc}%
A_{\lambda+\delta_{1}-\delta_{1}} & A_{\lambda+\delta_{2}-\delta_{1}} & \cdot
& \cdot & \cdot & A_{\lambda+\delta_{n}-\delta_{1}}\\
A_{\lambda+\delta_{1}-\delta_{2}} & A_{\lambda+\delta_{2}-\delta_{2}} &  &  &
& A_{\lambda+\delta_{n}-\delta_{2}}\\
&  &  &  &  & \\
&  &  &  &  & \\
&  &  &  &  & \\
A_{\lambda+\delta_{1}-\delta_{n}} & A_{\lambda+\delta_{2}-\delta_{n}} &  &  &
& A_{\lambda+\delta_{n}-\delta_{n}}%
\end{array}
\right)  .\qquad\ \ (1)
\]

This shows that for each homogeneous element $x\in A$,
\[
\deg(e_{ij}(x))=\deg(x)+\delta_{i}-\delta_{j}\text{,}\qquad\qquad\qquad
\qquad(2)
\]
where $e_{ij}(x)$ is a matrix with $x$ in the $ij$-position and with every
other entry $0$.

Now let $A$ be a $\Gamma$-graded ring and let $I$ be an arbitrary infinite
index set. Denote by $M_{I}(A)$ the matrix with entries indexed by $I\times I
$ having all except finitely many entries non-zero and for each $(i,j)\in
I\times I$, the $ij$-position is denoted by $e_{ij}(a)$ where $a\in A$.
Considering a "vector" $\bar{\delta}:=(\delta_{i})_{i\in I}$ where $\delta
_{i}\in\Gamma$ and following the usual grading on the matrix ring \ (see
(1),(2)), define, for each homogeneous element $a$,%
\[
\deg(e_{ij}(a))=\deg(a)+\delta_{i}-\delta_{j}\text{.}\qquad\qquad\qquad
\qquad(3)
\]
This makes $M_{I}(A)$ a $\Gamma$-graded ring, which we denote by
$M_{I}(A)(\bar{\delta})$. Clearly, if $I$ is finite with $|I|=n$, then the
graded ring coincides (after a suitable permutation) with $M_{n}(A)(\delta
_{1},\cdot\cdot\cdot,\delta_{n})$.

Suppose $E$ is a finite acyclic graph consisting of exactly one sink $v$. Let
$\{p_{i}:1\leq i\leq n\}$ be the set of all paths ending at $v$. Then it was
shown in (Lemma 3.4, \cite{AAS-1})
\[
L_{K}(E)\cong M_{n}(K)\qquad\qquad\qquad(4)
\]
under the map $p_{i}p_{j}^{\ast}\longmapsto e_{ij}$. Now taking into account
the grading of $M_{n}(K)$, it was further shown in (Theorem 4.14, \cite{H-1})
that the same map induces a graded isomorphism
\[
L_{K}(E)\longrightarrow M_{n}(K)(|p_{1}|,\cdot\cdot\cdot,|p_{n}|)\qquad
\qquad\qquad(5)
\]%
\[
p_{i}p_{j}^{\ast}\longmapsto e_{ij\text{.}}%
\]
In the case of a comet graph $E$ (that is, a finite graph $E$ with a cycle $c
$ without exits and a vertex $v$ on $c$ such that every path in $E$ which does
not include all the edges in $c$ ends at $v$), it was shown in (Lemma 2.7.1,
\cite{AAS}) that the map%
\[
L_{K}(E)\longrightarrow M_{n}(K[x,x^{-1}])\qquad\qquad\qquad(6)
\]
given by
\[
\text{ }p_{i}c^{k}p_{j}^{\ast}\longmapsto e_{ij}(x^{k})
\]
where the $e_{ij}$ are matrix units, induces an isomorphism. Again taking into
account the grading, it was shown in (Theorem 4.20, \cite{H-1}) that the map%
\[
L_{K}(E)\longrightarrow M_{n}(K[x^{|c|},x^{-|c|}])(|p_{1}|,\cdot\cdot
\cdot\cdot,|p_{n}|)\qquad\qquad\qquad(7)
\]
given by
\[
\text{ }p_{i}c^{k}p_{j}^{\ast}\longmapsto e_{ij}(x^{k|c|})
\]
induces a graded isomorphism. Later in the paper \cite{AAS-2}, the
isomorphisms (4) and (6) were extended to infinite acyclic and infinite comet
graphs respectively \ (see Proposition 3.6 \cite{AAS-2}). The same
isomorphisms with the grading adjustments will induce graded isomorphisms for
Leavitt path algebras of such graphs. We now describe this extension below.

Let $E$ be a graph such that no cycle in $E$ has an exit and such that every
infinite path contains a line point or is tail-equivalent to a rational path
$ccc\cdot\cdot\cdot.$ \ where $c$ is a cycle (without exits). \ Define an
equivalence relation in the set of all line points in $E$ by setting $u\sim v
$ if $T_{E}(u)\cap T_{E}(v)\neq\emptyset$\ \ Let \ $X$ be the set of
representatives of distinct equivalence classes of line points in $E$, so that
for any two line points $u,v\in X$ with $u\neq v$, $T_{E}(u)\cap
T_{E}(v)=\emptyset$. For each vertex \ $v_{i}\in X$, let $\overset{-}{p^{v_{i}%
}}:=\{p_{s}^{v_{i}}:s\in\Lambda_{i}\}$ be the set of all paths that end at
$v_{i}$, where $\Lambda_{i}$ is an index set which could possibly be infinite.
Denote by $|\overset{-}{p^{v_{i}}|}=\{|p_{s}^{v_{i}}|:s\in\Lambda_{i}\}$.

Let $Y$ be the set of all distinct cycles in $E$. As before, for each cycle
$c_{j}\in Y$ based at a vertex $w_{j}$, let $\overset{-}{q^{w_{j}}}%
:=\{q_{r}^{w_{j}}:r\in\Upsilon_{j}\}$ be the set of all paths that end at
$w_{j}$ but do not include all the edges of $c_{j}$, where $\Upsilon_{j}$ is
an index set which could possibly be infinite. Let

$|$ $\overset{-}{q^{w_{j}}}|:=\{|q_{r}^{w_{j}}|:r\in\Upsilon_{j}\}$. Then the
isomorphisms (5) and (7) extend to a $%
%TCIMACRO{\U{2124} }%
%BeginExpansion
\mathbb{Z}
%EndExpansion
$-graded isomorphism%
\[
L_{K}(E)\cong_{gr}%
%TCIMACRO{\dbigoplus \limits_{v_{i}\in X}}%
%BeginExpansion
{\displaystyle\bigoplus\limits_{v_{i}\in X}}
%EndExpansion
M_{\Lambda_{i}}(K)(|\overset{-}{p^{v_{i}}|)}\oplus%
%TCIMACRO{\dbigoplus \limits_{w_{j}\in Y}}%
%BeginExpansion
{\displaystyle\bigoplus\limits_{w_{j}\in Y}}
%EndExpansion
M_{\Upsilon_{j}}(K[x^{|c_{j}|},x^{-|c_{j}|}])(|\overset{-}{q^{w_{j}}}%
|)\qquad(8)
\]
where the grading is as in (3).

\section{Leavitt path algebras satisfying a polynomial identity}

Observe that Leavitt path algebras in general are highly non-commutative. For
instance, if the graph $E$ contains an edge $\ e$ with $u=s(e)\neq r(e)=v $,
then $ue=e$, \ but $eu=0$. Indeed, it is an easy exercise to conclude that if
$\ E$ is a connected graph, then $L_{K}(E)$ is a commutative ring if and only
if the graph $E$ consists of just a single vertex $v$ or is a loop $e$, that
is a single edge $e$ with $s(e)=r(e)=v$. In this case, $L_{K}(E)$ is
isomorphic to $K$ or $K[x,x^{-1}]$.

Note that to say a ring $R$ is commutative is equivalent to saying that $R$
satisfies the polynomial identity $xy-yx=0$. \ An algebra $A$ over a field $K
$ is said to \textbf{satisfy a polynomial identity} (or simply, a
\textbf{PI-algebra}), if there is a polynomial $p(x_{1},\cdot\cdot\cdot
,x_{n})$ in finitely many non-commuting variable $x_{1},\cdot\cdot\cdot,x_{n}$
with coefficients in $K$ such that $p(a_{1},\cdot\cdot\cdot,a_{n})=0$ for all
choices of elements $a_{1},\cdot\cdot\cdot,a_{n}\in A$. For example, the
Amitsur-Levitzky theorem (see \cite{P}) states that the ring $M_{n}(R)$ of
$n\times n$ matrices over a commutative ring $R$ satisfies the so called
standard polynomial identity $P_{n}(x_{1},\cdot\cdot\cdot,x_{n})=%
%TCIMACRO{\dsum \limits_{\sigma\in S_{n}}}%
%BeginExpansion
{\displaystyle\sum\limits_{\sigma\in S_{n}}}
%EndExpansion
\epsilon_{\sigma}x_{\sigma(1)}\cdot\cdot\cdot x_{\sigma(n)}$ where $S_{n}$ is
the symmetric group of $n!$ permutations of the set $\{1,\cdot\cdot\cdot,n\}$
and $\epsilon_{\sigma}=1$ or $-1$ according as $\sigma$ is even or odd. A
natural question is to characterize the Leavitt path algebras which satisfy a
polynomial identity. This is completely answered in the next theorem.

\begin{theorem}
\label{BLR}(\cite{BLR}) Let $E$ be an arbitrary graph. Then the following
properties are equivalent for $L_{K}(E)$:

(a) $L_{K}(E)$ satisfies a polynomial identity;

(b) No cycle in $E$ has an exit, there is a fixed positive integer $d$ such
that the number of distinct paths that end at any vertex $v$ is $\leq d$ and
the only infinite paths in $E$ are paths that are eventually of the form
$ggg\cdot\cdot\cdot$, for some cycle $g$;

(c) There is a fixed positive integer $d$ such that $L_{K}(E)$ is a subdirect
product of matrix rings over $K$ or $K[x,x^{-1}]$ of order at most $d$.
\end{theorem}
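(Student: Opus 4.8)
The plan is to establish the cycle $(a) \Rightarrow (b) \Rightarrow (c) \Rightarrow (a)$, since $(c) \Rightarrow (a)$ is essentially free and $(b) \Rightarrow (c)$ is exactly what the structure theory developed in the Preliminaries is built to deliver.

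For $(a) \Rightarrow (b)$, I would argue contrapositively, showing that if any of the three conditions in (b) fails then $L_K(E)$ contains a subalgebra that is known not to be PI, and hence $L_K(E)$ itself is not PI (subalgebras of PI-algebras are PI). If some cycle $c$ in $E$ has an exit, then a standard argument produces inside $L_K(E)$ a subalgebra isomorphic to (a corner of, or a matrix algebra over) the Leavitt algebra $L_K(1,2)$, or more simply a free subalgebra of rank $\geq 2$; free algebras on two generators are not PI, so this case is done. If no cycle has an exit but the number of distinct paths ending at some vertex can be made arbitrarily large — either there are vertices with unboundedly many incoming paths, or there is an infinite path that is \emph{not} eventually periodic (which forces a line point with an infinite tree or an infinite emitter along the path) — then by isomorphism $(8)$ the algebra $L_K(E)$ maps onto, or contains, $M_n(K)$ or $M_n(K[x,x^{-1}])$ for every $n$; since the standard identity $P_n$ fails in $M_{n+1}(K)$ (Amitsur–Levitzky is sharp), no single polynomial identity can hold in $L_K(E)$. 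This is where I expect the main technical care to be needed: ruling out the "bad" infinite paths and translating "unboundedly many paths ending at a vertex" into an honest embedding of large matrix algebras requires a careful case analysis of the graph, using line points, cycles without exits, and infinite emitters.

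For $(b) \Rightarrow (c)$, the hypotheses of (b) are precisely (a slight reformulation of) the graph hypotheses needed for the graded decomposition $(8)$: no cycle has an exit, every infinite path is eventually of the form $ggg\cdots$ for a cycle $g$, and the path-counting bound $\leq d$ means each index set $\Lambda_i$ and $\Upsilon_j$ appearing in $(8)$ has size $\leq d$. Then $(8)$ (forgetting the grading) realizes $L_K(E)$ as a direct \emph{sum} of matrix rings $M_{\Lambda_i}(K)$ and $M_{\Upsilon_j}(K[x,x^{-1}])$ of order $\leq d$; a direct product of such rings is certainly a subdirect product of matrix rings over $K$ or $K[x,x^{-1}]$ of order $\leq d$. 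One should double-check that $(8)$ applies to arbitrary (possibly infinite) graphs under exactly hypothesis (b) — the Preliminaries cite its extension to infinite acyclic and infinite comet graphs from \cite{AAS-2} — and that a possibly infinite direct sum still embeds in the corresponding direct product; both are routine.

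Finally, $(c) \Rightarrow (a)$: by Amitsur–Levitzky, $M_d(K)$ and $M_d(K[x,x^{-1}])$ both satisfy the standard identity $P_{2d}$, and any matrix ring of order $\leq d$ over these satisfies it too; since satisfying a fixed polynomial identity is inherited by products and by subrings, a subdirect product of such rings satisfies $P_{2d}$, so $L_K(E)$ is PI. The only subtlety is choosing the single degree (here $2d$) that works uniformly across all the factors, which the uniform bound $d$ in (c) guarantees. Overall, the heart of the proof is the graph-combinatorial $(a) \Rightarrow (b)$ direction; the other two implications are essentially bookkeeping on top of the matrix-decomposition results already quoted.
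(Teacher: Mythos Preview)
The paper is a survey and does not supply a proof of this theorem; it is simply quoted from \cite{BLR}. So there is no ``paper's own proof'' to compare against, only the original source.

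That said, your outline is the natural one and almost certainly matches the strategy in \cite{BLR}: the cycle $(a)\Rightarrow(b)\Rightarrow(c)\Rightarrow(a)$, with $(c)\Rightarrow(a)$ immediate from Amitsur--Levitzki plus closure of PI under subrings and products, and $(a)\Rightarrow(b)$ handled contrapositively by producing either a free subalgebra (when a cycle has an exit) or arbitrarily large matrix subalgebras (when the path bound fails or an aperiodic infinite path exists). Your identification of $(a)\Rightarrow(b)$ as the technical heart is correct.

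One point deserves more care than you give it. In $(b)\Rightarrow(c)$ you invoke the decomposition~(8), but note the remark immediately following the theorem in the paper: for row-finite $E$ one gets an honest ring direct sum, whereas for arbitrary $E$ the conclusion is only a \emph{subdirect} product. This is not an accident of phrasing. When $E$ has infinite emitters, the structure theorem~(8) as stated in the Preliminaries need not apply verbatim, and one must instead exhibit a separating family of homomorphisms from $L_K(E)$ onto matrix rings of bounded size (for instance by passing to suitable quotient graphs or corners). Your parenthetical ``one should double-check that~(8) applies'' is exactly the place where a genuine argument is needed rather than a citation; the uniform bound $d$ on paths into each vertex is what makes the target matrix rings small, but getting the subdirect embedding itself in the presence of infinite emitters is a separate (if not difficult) step that~(8) does not cover.
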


If the graph $E$ is row-finite, then the Leavitt path algebra $L_{K}(E)$ in
Theorem \ref{BLR} actually decomposes as a ring direct sum of matrix rings
over $K$ or $K[x,x^{-1}]$ of order at most a fixed positive integer $d$. This
shows that satisfying a polynomial identity imposes a serious restriction on
the structure of Leavitt path algebras.

\section{Four Important Graphical Conditions}

In this section, we shall illustrate how specific graphical conditions on the
graph $E$ give rise to various algebraic properties of $L_{K}(E)$. We
illustrate this by choosing four different graph properties of $E$.
Interestingly, a single graph theoretical property of $E$ often implies
several different ring properties for $L_{K}(E)$. It is amazing that a single
property that no cycle in a finite graph $E$ has an exit implies that the
corresponding Leavitt path algebra $L_{K}(E)$ possesses several different ring
properties such as being directly finite, self-injective, having bounded index
of nilpotence, a Baer ring, satisfying a polynomial identity, having
GK-dimension $\leq1$, etc. (see Theorem \ref{NC-finiteCase} below).
Consequently, Leavitt path algebras turn out to be useful tools in the
construction of various examples of rings. We will also describe the
interesting history behind the terms Condition (K) and Condition (L) which
play an important role in the investigation of both the graph C*-algebras and
Leavitt path algebras (see \cite{AAS}, \cite{T},\cite{T-2}).

Recall, a ring $R$ is said to be von Neumann regular if to each element $a\in
R$ there is an element $b\in R$ such that $a=aba$. The ring $R$ is said to be
$\pi$-regular (strongly left or right $\pi$-regular) if to each element $a\in
R$, there is a $b\in R$ and an integer $n\geq1$ such that $a^{n}=a^{n}ba^{n}$
($a^{n}=a^{n+1}b$ or $a^{n}=ba^{n+1}$). In general, these ring properties are
not equivalent. But as the next theorem shows, they all coincide for Leavitt
path algebras.

A graph $E$ is said to be \textbf{acyclic} if $E$ contains no cycles. The next
theorem characterizes the von Neumann regular Leavitt path algebras.

\begin{theorem}
\cite{AR} For an arbitrary graph $E$, the following conditions are equivalent
for $L:=L_{K}(E)$:

(a) The graph $E$ is acyclic;

(b) $L$ is von Neumann regular;

(c) $L$ is $\pi$-regular;

(d) $L$ is strongly left/right $\pi$-regular.
\end{theorem}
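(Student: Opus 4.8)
The plan is to prove the equivalences in a cycle, exploiting the fact that the hard implications all route through a careful analysis of what a cycle in $E$ forces inside $L_K(E)$. The trivial implications $(b)\Rightarrow(c)\Rightarrow(d)$ follow immediately from the definitions (taking $n=1$ in the definition of $\pi$-regular, and noting von Neumann regular rings are strongly $\pi$-regular with $n=1$), so the real content is $(a)\Rightarrow(b)$ and the reverse direction $(d)\Rightarrow(a)$ (or more conveniently $(c)\Rightarrow(a)$, since $(d)\Rightarrow(c)$ is formal).

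For $(a)\Rightarrow(b)$, I would first reduce to the case where $E$ is acyclic. The key structural input is that an acyclic graph $E$ has every infinite path eventually passing through... no, more precisely: in an acyclic graph there are no cycles at all, so the only way the hypothesis of the structure theorem (8) is met is the degenerate case where $Y=\emptyset$. But one must be careful: not every acyclic graph satisfies the ``every infinite path contains a line point'' hypothesis. So instead I would argue directly. One clean route: show $L_K(E) = \varinjlim L_K(F)$ where $F$ ranges over finite complete subgraphs (using that $L_K(E)$ is the direct limit of Leavitt path algebras of finite graphs, via the standard desingularization/direct-limit arguments in \cite{AAS}), reduce to $E$ finite and acyclic, and then invoke that a finite acyclic graph gives a finite direct sum of matrix algebras $M_n(K)$ (a finite acyclic graph is a finite disjoint-union-friendly gluing of the single-sink case (4)), which is semisimple artinian, hence von Neumann regular; finally use that a direct limit of von Neumann regular rings is von Neumann regular. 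Alternatively, and perhaps more elementarily, one can verify the regularity condition $a = aba$ directly on a general element $a=\sum k_i\alpha_i\beta_i^\ast$ using the fact that in an acyclic graph every vertex $v$ has $vL_K(E)v$ finite-dimensional.

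For the converse, the strategy is contrapositive: assume $E$ contains a cycle $c$ based at a vertex $v$, and produce an element of $L_K(E)$ witnessing the failure of strong $\pi$-regularity (which suffices, since $(c)$ and $(d)$ both imply this kind of behavior fails). The natural candidate is the element $c$ itself, or $v + c$, or $c + c^\ast$, living in the corner $vL_K(E)v$. The point is that the subalgebra of $vL_K(E)v$ generated by $c$ (and $c^\ast$) behaves like $K[x]$ or $K[x,x^{-1}]$ depending on whether $c$ has an exit, but in either case one shows $c$ is not strongly $\pi$-regular: for every $n$, $c^n \neq c^{n+1}b$ for any $b$, because applying the ghost path $(c^\ast)^n$ on the left and using the CK-relations reduces such an identity to $v = cb'$ inside $vLv$, and then a grading/degree argument (the element $c$ is homogeneous of degree $|c|>0$) shows no such $b'$ exists — $v$ has degree $0$ while $cb'$ would need a degree-$0$ component that is forced to be $0$. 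I would make this precise using the $\mathbb{Z}$-grading and the explicit description of homogeneous components.

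The main obstacle I anticipate is the $(a)\Rightarrow(b)$ direction in full generality: handling arbitrary (not row-finite, infinitely many vertices) acyclic graphs requires either the direct-limit machinery over subgraphs together with a desingularization step, or a hands-on verification that each element lies in a von Neumann regular corner. The direct-limit route is cleanest but depends on setting up that $L_K(E)$ is a direct limit of $L_K$ of finite graphs compatibly with the regularity property — which is standard but needs to be invoked carefully. A secondary subtlety is making sure in the converse that failure of strong $\pi$-regularity at the single element $c$ genuinely contradicts $(d)$; this is where the homogeneity of $c$ and the precise form of $L_n$ do the work, and one should double-check the degenerate case where $c$ is a loop at a vertex that is also a sink-like configuration, so that $vLv \cong K[x,x^{-1}]$ — here one uses that $K[x,x^{-1}]$ is not strongly $\pi$-regular (the element $x$ is a non-unit... actually $x$ is a unit, so one instead uses $x-1$, or better, observes $K[x,x^{-1}]$ has infinite GK-dimension... no — the cleanest is that $x+x^{-1}$ or a suitable element fails, or simply that $K[x,x^{-1}]$ is a domain that is not a field, hence not von Neumann regular, and not strongly $\pi$-regular either since it's a commutative domain of Krull dimension $1$). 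I would settle this last point by citing that a commutative ring is strongly $\pi$-regular iff it has Krull dimension $0$.
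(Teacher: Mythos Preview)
The survey does not prove this theorem; it only states it and cites \cite{AR}. So there is no in-paper argument to compare against, and one can only assess your proposal against the original Abrams--Rangaswamy proof.

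Your proposal contains a genuine error in the ``trivial'' implications. You claim $(b)\Rightarrow(c)\Rightarrow(d)$ are immediate, and justify the second step by ``von Neumann regular rings are strongly $\pi$-regular with $n=1$''. Both assertions are false for general rings. Von Neumann regularity does \emph{not} imply strong $\pi$-regularity: the endomorphism ring of an infinite-dimensional $K$-vector space is von Neumann regular, yet the unilateral shift $a$ satisfies $a^nR\supsetneq a^{n+1}R$ for all $n$. And ``$n=1$'' fails already in $M_2(K)$, where $E_{12}^2=0$ so $E_{12}\neq E_{12}^2b$. Likewise $(c)\Rightarrow(d)$ is false in general. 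What you actually need is $(a)\Rightarrow(d)$ proved directly, and your own direct-limit idea does this: any $a\in L_K(E)$ lies in $L_K(F)$ for a finite acyclic complete subgraph $F$, and $L_K(F)$ is semisimple artinian, hence strongly $\pi$-regular; the equation $a^n=a^{n+1}b$ found there persists in $L_K(E)$. You then need $(d)\Rightarrow(a)$ and $(c)\Rightarrow(a)$ separately (or cite the nontrivial theorem that strongly $\pi$-regular implies $\pi$-regular --- this is Azumaya, not ``formal'').

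Your $(a)\Rightarrow(b)$ via direct limits matches the approach in \cite{AR} and is fine. For the contrapositive direction you correctly realize that the cycle $c$ itself is the wrong element (when $c$ has no exit it is a unit in the corner), but your recovery is hand-wavy. The clean argument, uniform in whether $c$ has an exit, is to observe that $v,c,c^2,\ldots$ are $K$-linearly independent by the $\mathbb{Z}$-grading, and then show directly via a degree analysis that an element such as $v-c$ (or $v+c$) is not $\pi$-regular in $L$; alternatively, pass to a graded quotient in which $c$ has no exit and use that $K[x,x^{-1}]$ is a commutative domain that is not a field, hence not $\pi$-regular. Your final appeal to ``Krull dimension $0$'' characterizes strong $\pi$-regularity for commutative rings, which handles $(d)\Rightarrow(a)$ in the no-exit case but leaves the exit case unaddressed.
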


Another important graph property is Condition (K). In some sense this property
is diametrically opposite of being acyclic.

\begin{definition}
A graph $E$ satisfies \textbf{Condition (K)} if whenever a vertex $v$ lies on
a simple closed path $\alpha$, $v$ also lies on another simple closed path
$\beta$ distinct from $\alpha$.
\end{definition}

The Condition (K) implies a number of ring properties;

\begin{definition}
(i) A ring $R$ is said to be left/right \textbf{weakly regular }if for every
left/right ideal $I$ of $R$, $I^{2}=I$;

(ii) A ring $R$ is said to be an \textbf{exchange ring} if given any
left/right $R$-module $M$ and two direct decompositions of $M$ as
$M=M^{\prime}\oplus A$ and $M=%
%TCIMACRO{\dbigoplus \limits_{i=1}^{n}}%
%BeginExpansion
{\displaystyle\bigoplus\limits_{i=1}^{n}}
%EndExpansion
A_{i}$, where $M^{\prime}\cong R$, there exist submodules $B_{i}\subseteqq
A_{i}$ such that $M=M^{\prime}\oplus%
%TCIMACRO{\dbigoplus \limits_{i=1}^{n}}%
%BeginExpansion
{\displaystyle\bigoplus\limits_{i=1}^{n}}
%EndExpansion
B_{i}$.
\end{definition}

\begin{theorem}
\label{Condn K}( \cite{APS}, \cite{ARS}, \cite{T}) Let $E$ be an arbitrary
graph. Then the following conditions are equivalent for $L:=L_{K}(E)$:

(i) \ The graph $E$ satisfies Condition (K);

(ii) $L$ is an exchange ring;

(iii) $L$ is left/right weakly regular;

(iv) Every two-sided ideal of $L$ is a graded ideal.
\end{theorem}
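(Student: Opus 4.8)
The plan is to prove the theorem by showing each of (ii), (iii), (iv) equivalent to (i) in turn, using two standard inputs from the structure theory of Leavitt path algebras (see \cite{AAS}): first, that the quotient of $L$ by a graded ideal $I_{(H,S)}$ is again a Leavitt path algebra, namely $L_{K}(E\setminus(H,S))$ of the corresponding quotient graph, and that graded ideals are idempotent (they are generated by vertices and by elements $v-\sum_{e}ee^{\ast}$, all idempotents); and second, the Reduction Theorem: for every $0\neq a\in L$ there exist paths $\mu,\nu$ with $\mu^{\ast}a\nu$ equal either to a nonzero scalar multiple of a vertex or to a nonzero Laurent polynomial $p(c)$ in a cycle $c$ \emph{without exits}. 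I would also use the combinatorial fact (see \cite{AAS}) that $E$ satisfies Condition (K) if and only if $E\setminus(H,S)$ satisfies Condition (L) for every admissible pair $(H,S)$; equivalently, no quotient graph of $E$ contains a cycle without an exit.

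For \emph{(i) $\Rightarrow$ (iv)}, given an ideal $I$ I would pass to its graded part $I_{\mathrm{gr}}=\bigoplus_{n}(I\cap L_{n})$, a graded ideal, and observe that $I/I_{\mathrm{gr}}$ is an ideal of $L/I_{\mathrm{gr}}\cong L_{K}(E\setminus(H,S))$ all of whose homogeneous components vanish. Since the quotient graph satisfies Condition (L), the Reduction Theorem never lands in the polynomial case, so any nonzero element of $I/I_{\mathrm{gr}}$ would push a nonzero scalar multiple of a vertex into $I/I_{\mathrm{gr}}$, contradicting the vanishing of the degree-$0$ component; hence $I=I_{\mathrm{gr}}$ is graded. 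For \emph{(iv) $\Rightarrow$ (i)}, I argue contrapositively: if Condition (K) fails, some quotient graph $F=E\setminus(H,S)$ carries a cycle $c$ without exits based at a vertex $v$, and since $c$ has no exit the CK-relations collapse to a corner isomorphism $vL_{K}(F)v\cong K[x,x^{-1}]$ with $v\mapsto1$, $c\mapsto x$, $c^{\ast}\mapsto x^{-1}$; under this isomorphism $v\langle c-v\rangle v=(vL_{K}(F)v)(c-v)(vL_{K}(F)v)$ is the non-graded ideal $(x-1)$ of $K[x,x^{-1}]$. As cornering by the homogeneous idempotent $v$ preserves gradedness of ideals, a non-graded corner forces $\langle c-v\rangle$ to be a non-graded ideal of $L_{K}(F)=L/I_{(H,S)}$, and its preimage is then a non-graded ideal of $L$, so (iv) fails.

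For \emph{(i) $\Leftrightarrow$ (ii)} and \emph{(i) $\Leftrightarrow$ (iii)}, the reverse implications reuse this construction. Being an exchange ring passes to quotients and to corners, while $K[x,x^{-1}]$ --- a commutative domain that is not a field, with zero Jacobson radical --- is not an exchange ring; so if Condition (K) fails then the corner $vL_{K}(F)v$ of the quotient $L_{K}(F)$ of $L$ is not exchange, whence $L$ is not exchange, giving (ii) $\Rightarrow$ (i). Similarly, left weak regularity passes to quotients, and a short computation with $a=c-v$ gives $v\langle a\rangle^{2}v\subseteq(vL_{K}(F)v)a(vL_{K}(F)v)a(vL_{K}(F)v)$, which corresponds to $(x-1)^{2}K[x,x^{-1}]\subsetneq(x-1)K[x,x^{-1}]$, so $\langle c-v\rangle^{2}\neq\langle c-v\rangle$; thus $L_{K}(F)$, a quotient of $L$, has a non-idempotent two-sided ideal, contradicting left weak regularity of $L$, which gives (iii) $\Rightarrow$ (i). The forward implications (i) $\Rightarrow$ (ii) \cite{APS} and (i) $\Rightarrow$ (iii) \cite{ARS}, \cite{T} are the substantial part: the strategy is to use the Reduction Theorem to reduce the property to the unital corners $\varepsilon L\varepsilon$ with $\varepsilon$ a finite sum of vertices, and then to run an induction along a well-ordered filtration of $E$ by hereditary saturated subsets, the point being that Condition (K) rules out the appearance of any factor $M_{n}(K[x,x^{-1}])$ --- such a factor would require an exit-free cycle surviving in a quotient --- leaving only pieces that are exchange, resp. weakly regular; Hazrat's theorem \cite{H} that $L$ is always graded von Neumann regular may also be invoked, Condition (K) being exactly what is needed to promote graded regularity to the ungraded properties by closing the gap between graded and arbitrary ideals.

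The routine directions are all of the form ``ring property $\Rightarrow$ Condition (K)'': each reduces to the single observation that $K[x,x^{-1}]$ --- the Leavitt path algebra of a loop, the prototypical graph failing Condition (K) --- sits as a corner of a quotient of $L$ whenever Condition (K) fails, and $K[x,x^{-1}]$ fails every ring property in the list. I expect the main obstacle to be the forward implications $\text{(K)}\Rightarrow\text{exchange}$ and $\text{(K)}\Rightarrow\text{left/right weakly regular}$, where a purely combinatorial hypothesis on $E$ must be turned into a global ring-theoretic property of $L$: the Reduction Theorem localizes the issue to corners, but one still has to exploit, uniformly, that Condition (K) forbids exit-free cycles in \emph{every} quotient graph, and organising this is precisely the work carried out in \cite{APS} and \cite{ARS}. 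A secondary, purely graph-theoretic obstacle is the lemma that Condition (K) on $E$ is equivalent to Condition (L) on all quotient graphs; its proof is elementary but hinges on the fact that the vertex sets $H$ involved are hereditary, so a closed path cannot re-enter $E^{0}\setminus H$ once it has left it.
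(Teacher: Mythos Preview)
The paper is a survey and gives no proof of this theorem at all: it simply records the statement and attributes the implications to \cite{APS}, \cite{ARS}, and \cite{T}. There is therefore nothing to compare your argument against in the paper itself.

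That said, your outline is essentially the standard one found in those references and in \cite{AAS}. The implications (iv)$\Rightarrow$(i), (ii)$\Rightarrow$(i), (iii)$\Rightarrow$(i) via the corner $vL_{K}(F)v\cong K[x,x^{-1}]$ at an exit-free cycle in a quotient graph are correct as you wrote them; your computation $v\langle a\rangle^{2}v=(vFv)a(vFv)a(vFv)$ for $a=c-v$ goes through because $a=va=av$, so the middle $F$ really does corner to $vFv$. The proof of (i)$\Rightarrow$(iv) via the graded part $I_{\mathrm{gr}}$ and the Reduction Theorem in the quotient is exactly the textbook argument. You are right to flag (i)$\Rightarrow$(ii) and (i)$\Rightarrow$(iii) as the substantive directions and to defer their details to \cite{APS} and \cite{ARS}; your sketch of an inductive filtration is in the spirit of those proofs but is not yet a proof, and you should be aware that invoking Hazrat's graded regularity \cite{H} together with (i)$\Leftrightarrow$(iv) already gives (i)$\Rightarrow$(iii) cleanly (every ideal is graded, graded ideals are generated by idempotents, hence idempotent as one-sided ideals), which is a shorter route than the one you describe.
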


\begin{definition}
A graph $E$ is said to satisfy \textbf{Condition (L)}, if every cycle in $E$
has an exit.
\end{definition}

\begin{theorem}
\cite{R-1} Let $E$ be an arbitrary graph. Then the following are equivalent
for $L_{K}(E)$:

(i) $E$ satisfies Condition (L);

(ii) $L$ is a \textbf{Zorn ring}, that is, every (non-nil) right/left ideal
$I$ contains a non-zero idempotent.;

(iii) Every element $a\in L$ \ is the von Neumann inverse of another element
$b\in L$; that is, to each $a\in L$, there is an element $b\in L$ such that
$bab=b$.
\end{theorem}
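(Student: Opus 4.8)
The plan is to establish the cycle of implications (i) $\Rightarrow$ (iii) $\Rightarrow$ (ii) $\Rightarrow$ (i). (In (iii) one reads ``$b\neq 0$ whenever $a\neq 0$'', as otherwise the condition is vacuous, since $0=0\cdot a\cdot 0$.) For (i) $\Rightarrow$ (iii) the key input is the Reduction Theorem for Leavitt path algebras (see \cite{AAS}): for every $0\neq a\in L$ there are paths $\mu,\nu$ with $v:=r(\mu)=r(\nu)$ such that $0\neq\mu^{\ast}a\nu$ and either (a) $\mu^{\ast}a\nu=kv$ with $0\neq k\in K$, or (b) $\mu^{\ast}a\nu$ is a nonzero Laurent polynomial in a cycle $c$ based at $v$ with no exit. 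Since $E$ satisfies Condition (L), case (b) is excluded, so $\mu^{\ast}a\nu=kv$. I would then take $b=k^{-1}\nu\mu^{\ast}$, which is a nonzero monomial of $L$ (it has the form $\alpha\beta^{\ast}$ with $r(\alpha)=r(\beta)=v$), and compute, using $\nu v=\nu$,
\[
bab=k^{-2}\nu\,(\mu^{\ast}a\nu)\,\mu^{\ast}=k^{-2}\nu\,(kv)\,\mu^{\ast}=k^{-1}\nu\mu^{\ast}=b,
\]
which gives (iii).

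For (iii) $\Rightarrow$ (ii), let $I$ be a non-nil right ideal of $L$ and choose a non-nilpotent (in particular nonzero) element $a\in I$. Pick $0\neq b$ with $bab=b$ and set $e:=ab\in aL\subseteq I$. Then $e^{2}=a(bab)=ab=e$, and $e\neq 0$ since $b=b(ab)=be$ would otherwise force $b=0$; thus $I$ contains a nonzero idempotent. For a non-nil left ideal $J$, apply the same argument to the right ideal $J^{\ast}$ and transport a nonzero idempotent back through the involution $\ast$, which preserves nilpotence, nonzeroness and idempotency. Hence $L$ is a Zorn ring.

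The only delicate direction is (ii) $\Rightarrow$ (i), which I would prove contrapositively. If $E$ fails Condition (L), there is a cycle $c$ with no exit based at a vertex $v$; since the only paths in $E$ that both start and end at $v$ are the powers of $c$, one has the standard identification $vLv\cong K[x,x^{-1}]$ with $v\leftrightarrow 1$ and $c\leftrightarrow x$. It therefore suffices to show that a corner $eRe$ of a Zorn ring $R$ (with $e=e^{2}$) is again a Zorn ring, together with the trivial fact that $K[x,x^{-1}]$ is \emph{not} a Zorn ring: the ideal $(1-x)K[x,x^{-1}]$ is non-nil, as $K[x,x^{-1}]$ is a domain, yet contains no nonzero idempotent, the only idempotents of $K[x,x^{-1}]$ being $0$ and $1$ with $1\notin(1-x)K[x,x^{-1}]$. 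For the corner statement one takes a non-nil right ideal $J$ of $eRe$, picks a non-nilpotent $a\in J$ (note $a=ae\in aR$), uses that $R$ is a Zorn ring to obtain a nonzero idempotent $f=ar\in aR$, observes $ef=f$, and then checks that $g:=fe$ is a nonzero idempotent lying in $J$: indeed $g^{2}=f(ef)e=f^{2}e=g$, and $g=fe=a(ere)\in a\,(eRe)\subseteq J$, while $g=0$ would give $f=(ef)(ef)=e(fe)f=0$, a contradiction. Applying this with $R=L$ and $e=v$, if $L$ were a Zorn ring then so would be $vLv\cong K[x,x^{-1}]$ --- impossible. Hence $L$ is not a Zorn ring, and the cycle is complete.

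The crux of the whole argument is twofold: having the Reduction Theorem available and invoking it correctly in (i) $\Rightarrow$ (iii) --- this is the single step that actually converts the graph condition (L) into structural information about arbitrary elements of $L$ --- and the idempotent bookkeeping in the corner lemma; the remaining implications are formal. As an alternative to the corner lemma one can instead exhibit the explicit non-nil, idempotent-free right ideal $(v-c)L$ of $L$: using $v-c=v(v-c)=(v-c)v$ one gets $(v-c)L=(v-c)I(v)$, and under the (infinite) comet isomorphism $I(v)\cong M_{\Lambda}(K[x,x^{-1}])$ of \cite{AAS-2} recalled above this becomes the set of finitely supported matrices concentrated in one row with entries in $(1-x)K[x,x^{-1}]$, which is visibly non-nil and idempotent-free; but the corner argument is shorter.
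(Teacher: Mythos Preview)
The survey does not include a proof of this result --- it merely records the theorem and cites \cite{R-1} --- so there is no argument in the paper to compare against. Your proof is correct and is essentially the one given in that reference: the Reduction Theorem is precisely the device used for (i) $\Rightarrow$ (iii); the passage (iii) $\Rightarrow$ (ii) via the idempotent $e=ab$ is standard (and, as your argument implicitly shows, it in fact yields the stronger conclusion that every \emph{nonzero} one-sided ideal contains a nonzero idempotent); and the contrapositive (ii) $\Rightarrow$ (i) through the corner $vLv\cong K[x,x^{-1}]$, together with the lemma that corners of Zorn rings are Zorn, is the expected route. Your reading of (iii) --- that one must have $b\neq 0$ whenever $a\neq 0$ --- is indeed the intended, non-vacuous interpretation.
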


\textbf{An interesting history of Conditions (K) and (L)}: One may wonder
about the choice of the letters K and L in the terms Condition (K) and
Condition (L). There is an interesting narrative about the origins of these
terms. I am grateful to Mark Tomforde for outlining this history to me which
he will also be including in his forth-coming book on Graph Algebras
(\cite{T-2}). Both these two graph conditions were originally introduced by
graph C*-algebraists. It all started when Cuntz and Krieger (whom some
consider the founders of graph C*-algebras), introduced in their original
paper (\cite{CK}) \ a condition on matrices with entries in $\{0,1\}$ and
called it Condition (I). Assuming that the "I" is the English letter I and not
the Roman numeral one, Pask and Raeburn introduced in 1996 a Condition (J) in
their paper (\cite{PR}), as J is the letter that follows I in the English
alphabet. (They apparently did not recognize that C\"{u}ntz and Krieger also
introduced a follow-up Condition (II), thus indicating, in their view, I and
II stand for Roman numerals). Conforming to this pattern, when Kumjian, Pask,
Raeburn and Renault introduced a new condition in their 1997 paper
(\cite{KPRR}), they chose the letter K to denote this new condition and called
it Condition (K). Continuing this pattern yet again, Kumjian, Pask and Raeburn
introduced Condition (L) in 1998 (\cite{KPR}). Actually, Astrid an Huef later
showed that Condition (L) coincides with Condition (I) for graphs of finite
matrices. Moreover, Condition (K) is considered analogous to Condition (II)
for Cuntz-Krieger algebras. In all the investigations that followed in graph
C*-algebras and also in Leavitt path algebras, Conditions (K) and (L) emerged
as important graph conditions. Poor Condition (J) remains neglected !

Recall, Condition (L) requires every cycle to have an exit. We next consider a
graph property that is diametrically opposite to Condition (L), namely, no
cycle in the graph has an exit. This implies several interesting ring/module properties.

First, consider a finite graph $E$ in which no cycle has an exit. In this
case, $L_{K}(E)$ is a ring with identity. \ We begin recalling a number of
ring properties.

A ring $R$ with identity $1$ is said to be \textbf{directly finite} if for any
two elements $x,y$, $xy=1$ implies $yx=1$. This is equivalent to $R$ being not
isomorphic to any proper direct summand of $R$ as a left or a right
$R$-module. A ring $R$ with identity is called a \textbf{Baer ring }if the
left/right annihilator of every subset $X$ of $R$ is generated by an
idempotent. A $\Gamma$-graded ring $R$ is said to be a \textbf{graded Baer
ring}, if the left/right annihilator of every subset $X$ of homogeneous
elements is generated by a homogeneous idempotent. A ring $R$ is said to have
\textbf{bounded index of nilpotence} if there is a positive integer $n$ which
is such that $a^{n}=0$ for every nilpotent element $a\in R$.

\begin{theorem}
\label{NC-finiteCase}( \cite{AAJZ}, \cite{BLR}, \cite{HR}, \cite{HRS-1},
\cite{HV}, \cite{V} ) For a finite graph $E$, the following conditions are
equivalent for $L:=L_{K}(E)$:

(i) \ \ \ \ \ No cycle in $E$ has an exit;

(ii) \ $\ \ \ L$ is directly finite;

(iii) $\ \ \ L$ is a Baer ring;

(iv) $\ \ \ L$ is a graded Baer ring;

(v) \ $\ \ L$ is a graded left/right self-injective ring;

(vi) $\ \ L$ satisfies a polynomial identity;

(vii) \ $L$ has bounded index of nilpotence;

(viii) $L$ is graded semi-simple;

(ix) \ $L$ has GK-dimension $\leq1;$

(x) \ $L$ is finite over its center.
\end{theorem}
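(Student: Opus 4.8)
The plan is to establish the equivalences by a combination of a structure theorem for the "no cycle has an exit" case and known characterizations of each individual ring property; the key observation is that condition (i) forces a very rigid decomposition of $L_K(E)$. First I would show that (i) implies a concrete normal form: for a finite graph $E$ in which no cycle has an exit, the quotient graph obtained by deleting cycle edges is a finite acyclic graph, and by the comet/acyclic decomposition recalled in Section 2 (formulas (4)--(8)), $L_K(E)$ is a finite direct sum $\bigoplus_i M_{n_i}(K)\oplus\bigoplus_j M_{m_j}(K[x,x^{-1}])$. This single isomorphism is the engine: from it almost every implication (i)$\Rightarrow$(ii)--(x) is either classical or an easy computation. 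Indeed $M_n(K)$ and $M_m(K[x,x^{-1}])$ are directly finite, Baer (being prime Goldie / hereditable Noetherian domains tensored up), PI (Amitsur--Levitzky), of bounded nilpotence index, of GK-dimension $\le 1$ (GK-dim of $M_m(K[x,x^{-1}])$ is $1$, of $M_n(K)$ is $0$, as recalled in Section 2), and finite over the center $\bigoplus_i K\oplus\bigoplus_j K[x,x^{-1}]$; and a finite direct sum of such rings inherits all of these properties. The graded statements (iv), (v), (viii) follow the same way once one puts the grading of (8) on the decomposition: a matrix ring $M_n(K)(\bar\delta)$ over a field with the shift grading is graded semisimple and graded self-injective, and the graded-Baer property is checked on homogeneous subsets using that $K$ is a graded field.

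Second, for the reverse direction I would argue the contrapositive of each: if some cycle in $E$ \emph{has} an exit, I must violate every one of (ii)--(x). The uniform mechanism is that a cycle $c$ with an exit produces, inside $L_K(E)$ (or a corner of it), a copy of the Leavitt algebra $L_K(R_2)$ of the rose with two petals, or more precisely an infinite set of mutually orthogonal isomorphic idempotents summing into $s(c)$; this is the standard "a cycle with an exit gives an infinite idempotent" phenomenon. From such a configuration: $L$ is not directly finite (one exhibits $xy=1\ne yx$ inside the corner), so $\neg$(ii); it is not Baer nor graded Baer since infinite-rank annihilator behavior appears, so $\neg$(iii),(iv); it is not PI because it contains a free subalgebra or a copy of $L_K(R_2)$, which is known not to be PI — here Theorem \ref{BLR} is directly applicable, since condition (b) there fails exactly when some cycle has an exit (given $E$ finite), so $\neg$(vi); bounded index of nilpotence fails because one produces nilpotents of unbounded index from the exit data, so $\neg$(vii); GK-dimension becomes $\ge 2$ (in fact infinite once there is a cycle with an exit in a finite graph — this is a known GK-dim computation), so $\neg$(ix); $L$ is not finite over its center, since the center of $L_K(E)$ is small (computed in the literature) while $L$ is "too big", so $\neg$(x); and graded self-injectivity / graded semisimplicity fail because a cycle with an exit yields a non-graded-semisimple corner (one can cite that $\mathrm{Ext}^1\ne 0$ for the Chen module of such a cycle, as mentioned in the Introduction), so $\neg$(v),(viii).

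A cleaner way to organize the converse, which I would actually prefer, is to route everything through already-proved theorems in the excerpt rather than reproving each failure by hand. Observe: (vi)$\Leftrightarrow$(i) for finite $E$ is immediate from Theorem \ref{BLR}, since for a finite graph the conditions in \ref{BLR}(b) — bounded path counts and only eventually-periodic infinite paths — are automatic once no cycle has an exit, and conversely a cycle with an exit breaks \ref{BLR}(b). That pins down one equivalence for free. Then I would close the cycle of implications by showing each of (ii),(iii),(iv),(v),(vii),(viii),(ix),(x) implies (i), most efficiently by showing each implies (vi) or directly contradicts the existence of a cycle with an exit; for the graded properties (iv),(v),(viii) I would use that, by Hazrat's graded decomposition, $L_K(E)$ is always graded von Neumann regular, and graded semisimplicity then pins down acyclicity-type behavior on the cyclic part, forcing no exits. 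The main obstacle, I expect, is not any single implication but assembling a genuinely economical proof: many of these equivalences live in different papers (\cite{AAJZ}, \cite{BLR}, \cite{HR}, \cite{HRS-1}, \cite{HV}, \cite{V}) with different methods, and the honest "proof" is to quote the structural decomposition (8), verify the ten properties for $M_n(K)(\bar\delta)$ and $M_m(K[x^{|c|},x^{-|c|}])(\bar\delta)$ once and for all, and then handle the converse uniformly via the infinite-idempotent / $L_K(R_2)$-subalgebra argument together with Theorem \ref{BLR}. The genuinely delicate points are (a) checking the \emph{graded} self-injectivity of infinite matrix rings over a graded field with an arbitrary shift vector $\bar\delta$, where one must be careful that "finitely many nonzero entries" interacts correctly with graded injectivity, and (b) the GK-dimension jump from $\le 1$ to $\ge 2$, which requires an explicit count of paths in $E$ once a cycle acquires an exit.
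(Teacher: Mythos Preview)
The paper is a survey and does not prove this theorem at all: it simply assembles the ten equivalences from the cited sources \cite{AAJZ}, \cite{BLR}, \cite{HR}, \cite{HRS-1}, \cite{HV}, \cite{V} and states them together, remarking afterward that under condition (i) the algebra decomposes as a finite graded direct sum of matrix rings over $K$ and $K[x,x^{-1}]$ via formulas (5) and (7). Your strategy---derive (i)$\Rightarrow$(ii)--(x) from that decomposition and prove each converse by contrapositive using the pathology created by a cycle with an exit, routing (vi) through Theorem~\ref{BLR}---is exactly the approach taken across those cited papers, so in spirit you are reconstructing the ``proof'' the survey omits.

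Two small corrections to your sketch. First, the parenthetical claim that the GK-dimension becomes \emph{infinite} whenever a cycle has an exit is false: for the Toeplitz graph (one loop at $v$ plus one edge from $v$ to a sink) the cycle has an exit but $\mathrm{GK}\text{-}\dim L_K(E)=2$. Infinite GK-dimension requires two cycles sharing a vertex. Your primary assertion that the dimension jumps to $\geq 2$ is correct and is all you need for $\neg$(ix); the count you flag as delicate in point (b) is exactly the right thing to do. Second, a cycle with an exit does not in general produce a copy of $L_K(R_2)$ inside $L$ (again the Toeplitz graph is a counterexample); what it always produces is the relation $c^{\ast}c=v$ with $cc^{\ast}\neq v$ in the corner $vLv$, and hence an infinite family of orthogonal equivalent idempotents $c^n(v-cc^{\ast})(c^{\ast})^n$. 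That weaker fact is already enough to kill direct finiteness, the Baer property, bounded nilpotence index, and graded semisimplicity, so your argument goes through once you replace the $L_K(R_2)$ claim with this.
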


Thus if $E$ is the following graph,
\[%
\begin{array}
[c]{ccccccccccc}
&  &  &  &  &  & \bullet & \longrightarrow & \bullet &  & \\
&  &  &  &  & \nearrow &  &  &  & \searrow & \\
\bullet_{{}} & \longrightarrow & \bullet & \longrightarrow & \bullet &  &  &
&  &  & \bullet\\
&  &  &  &  & \nwarrow &  &  &  & \swarrow & \\
&  &  &  &  &  & \bullet & \longleftarrow & \bullet &  &
\end{array}
\]
then $L_{K}(E)$ will possess all the stated nine ring properties.

For a finite graph $E$, if $L_{K}(E)$ satisfies any of the equivalent
conditions in the preceding theorem, $L_{K}(E)$ decomposes as a graded direct
sum of finitely many matrix rings of finite order over $K$ and/or
$K[x,x^{-1}]$ which are given the matrix gradings indicated in equations (5)
and (7) in the Preliminaries section.

For a ring $R$ without identity, but with local units, $R$ is said to be
directly finite if for every $x,y\in R$ and an idempotent $u\in R$ satisfying
$ux=x=xu,uy=y=yu$, we have $xy=u$ implies $yx=u$. Every commutative ring is
trivially directly finite.

If $R$ is a ring without identity, $R$ is called a \textbf{locally Baer ring}
(\textbf{locally graded Baer }ring) if for every idempotent (homogeneous
idempotent) $e$, the corner $eRe$ is a Baer (graded Baer) ring.

\begin{theorem}
(\cite{HR}, \cite{HV}) Let $E$ be an arbitrary graph. Then the following
conditions are equivalent for $L:=L_{K}(E)$:

(i) \ \ No cycle in $E$ has an exit, $E$ is row-finite and every infinite path
ends at a sink or a cycle.

(ii) $\ L$ is a locally Baer ring;

(iii) $L$ is a \ graded locally Baer ring;

(iv) $L$ is a graded left/right self-injective ring;

(v) $L$ is graded isomorphic to a ring direct sum of \ matrix rings%

\[
L_{K}(E)\cong_{gr}%
%TCIMACRO{\dbigoplus \limits_{v_{i}\in X}}%
%BeginExpansion
{\displaystyle\bigoplus\limits_{v_{i}\in X}}
%EndExpansion
M_{\Lambda_{i}}(K)(|\overset{-}{p^{v_{i}}|)}\oplus%
%TCIMACRO{\dbigoplus \limits_{w_{j}\in Y}}%
%BeginExpansion
{\displaystyle\bigoplus\limits_{w_{j}\in Y}}
%EndExpansion
M_{\Upsilon_{j}}(K[x^{t_{j}},x^{-t_{j}}])(|\overset{-}{q^{w_{j}}}|)
\]
where $\Lambda_{i}$,$\Upsilon_{j}$ are suitable index sets, the $t_{j}$ are
positive integers,\ $X$ is the set of representatives of distinct equivalence
classes of line points in $E$ and $Y$ is the set of all distinct cycles
(without exits) in $E$.
\end{theorem}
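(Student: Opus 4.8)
The plan is to use statement (v) as the pivot: prove the structural equivalence (i) $\Leftrightarrow$ (v), then deduce (ii), (iii) and (iv) from the explicit form in (v), and finally close each equivalence by showing that any one of (ii), (iii), (iv) forces (i).

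For (i) $\Rightarrow$ (v) I would first do the graph-theoretic bookkeeping. Under (i) the graph $E$ meets the standing hypotheses behind the graded decomposition (8) of the Preliminaries: since $E$ is row-finite and no cycle has an exit, a K\"onig-type argument shows that each vertex either lies in the tree $T_E(u)$ of a line point $u$ or feeds, without ever meeting an exit, into exactly one cycle $c_j$, and the condition on infinite paths makes this accounting exhaustive. Distinct line-point trees are disjoint, because a line point cannot reach a cycle; and since no edge can leave a cycle without exits, the ``comet pieces'' around distinct cycles are disjoint from each other and from all the trees. Hence the relevant hereditary saturated vertex sets partition $E^{0}$. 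Granting the partition, the two-sided ideal generated by a line-point piece is graded isomorphic to $M_{\Lambda_i}(K)(|\overline{p^{v_i}}|)$ by the infinite acyclic case (Proposition~3.6 of \cite{AAS-2}, together with the grading of equation (5); see \cite{H-1}), and the ideal generated by a comet piece is graded isomorphic to $M_{\Upsilon_j}(K[x^{t_j},x^{-t_j}])(|\overline{q^{w_j}}|)$ with $t_j=|c_j|$ by the infinite comet case and equation (7); since the generating vertex sets partition $E^{0}$, these ideals intersect trivially and their sum is all of $L$, which is (v). The implication (v) $\Rightarrow$ (i) is immediate once one reads off the graph of each summand.

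For (v) $\Rightarrow$ (ii), (iii), (iv): in the decomposition every homogeneous idempotent $e$ has finite matrix support and therefore lies in a finite sub-sum of finite matrix blocks, so $eLe$ is a finite direct product of corners of the ordinary matrix rings $M_n(K)$ and $M_n(K[x^{t_j},x^{-t_j}])\cong M_n(K[x,x^{-1}])$; the former are semisimple and the latter are matrix rings over a commutative domain, hence Baer, and corners and finite products of Baer rings are Baer, which gives (ii). With the matrix gradings in force, $M_n(K)$ and $M_n(K[x^{t},x^{-t}])$ are matrix rings over the graded fields $K$, resp. $K[x^{t},x^{-t}]$, hence graded semisimple; this yields (iii) by the same corner/product argument and also gives (iv) on each summand. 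To pass (iv) through the (possibly infinite) direct sum, observe that the summands all have graded local units, the category of unital graded left modules over $\bigoplus_\alpha R_\alpha$ is the product of the corresponding categories over the $R_\alpha$, and injectivity in a product category is tested componentwise; so $L$ is graded left (and, symmetrically, right) self-injective.

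For the converses (ii) $\Rightarrow$ (i), (iii) $\Rightarrow$ (i), (iv) $\Rightarrow$ (i) I would argue by contraposition, using that a (graded) idempotent corner of a (graded) Baer, resp. (graded) self-injective, ring again has that property. If some cycle has an exit, rebasing it at a vertex $v$ that emits an exit yields $c^{\ast}c=v$ but $cc^{\ast}\neq v$ inside $vLv$, so $vLv$ carries the infinite orthogonal family of nonzero homogeneous idempotents $f_k:=c^{k}c^{\ast k}-c^{k+1}c^{\ast(k+1)}$; from this one derives failure of $vLv$ to be (graded) Baer, and for (iv) one invokes instead the module-theoretic obstruction attached to a cycle (the non-vanishing of $\mathrm{Ext}^{1}$ for the associated Chen simple module, as recalled in the Introduction), or reduces to a finite complete hereditary subgraph $F$ containing the cycle-with-exit configuration, where Theorem~\ref{NC-finiteCase} already forbids $L_K(F)$ from being Baer, graded Baer or graded self-injective; since $F^{0}$ is finite and hereditary, $L_K(F)$ is the corner $pLp$ with $p=\sum_{w\in F^{0}}w$. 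If instead $E$ is not row-finite, or carries an infinite path that is eventually neither a line-point path nor of the form $ggg\cdots$, one produces directly a vertex $v$ such that $vLv$ (carrying an infinite orthogonal family of idempotents with no common ``upper bound'' inside the ring) is not (graded) Baer and not graded self-injective.

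The step I expect to be the main obstacle is this last part of the converse: the cycle-with-exit case reduces cleanly to the finite-graph theorem, but the non-row-finite case and the ``escaping infinite path'' case do not localize to a finite subgraph at all, and must be handled by a direct structural analysis of the infinite matrix rings with unbounded degree shifts that such graphs would force; pinning down exactly which corner fails to be (graded) Baer and why graded self-injectivity then breaks is the delicate point, and it is here that the row-finiteness hypothesis of (i) is genuinely used, since otherwise an exit can hide ``at infinity'' past an infinite emitter. A secondary technical point is the bookkeeping that graded self-injectivity both descends to homogeneous-idempotent corners and ascends through arbitrary direct sums of rings with graded local units.
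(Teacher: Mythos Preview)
The paper does not contain a proof of this theorem: it is a survey, and the result is simply stated with attributions to \cite{HR} and \cite{HV}; no argument is given in the text. Consequently there is no ``paper's own proof'' against which to compare your attempt.

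That said, your strategy of pivoting on the structural description (v) is exactly what the paper's Preliminaries section is set up to support: the graded decomposition you invoke is precisely equation~(8), and the paper explicitly records the graded isomorphisms (5) and (7) and their infinite extensions from \cite{AAS-2} and \cite{H-1} for this purpose. So your (i)~$\Rightarrow$~(v) direction is in line with the machinery the survey assembles. Your reduction of the cycle-with-exit obstruction to the finite-graph Theorem~\ref{NC-finiteCase} via a corner $pLp$ is also consonant with how the survey organizes the material.

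One point worth flagging: your own diagnosis of the weak spot is accurate. The converse directions in the non-row-finite case and the ``escaping infinite path'' case cannot be handled by passing to a finite complete subgraph, and your sketch there (``one produces directly a vertex $v$ such that $vLv$ \ldots\ is not (graded) Baer'') is a placeholder rather than an argument. In the cited sources this is where the real work lies; if you intend to fill this in yourself, you will need an explicit annihilator computation in $vLv$ for a well-chosen infinite orthogonal family, together with a separate argument for why graded self-injectivity fails (the $\mathrm{Ext}^1$ obstruction you mention addresses only the cycle case, not infinite emitters or wandering paths). A full comparison of approaches would require consulting \cite{HR} and \cite{HV} directly.
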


\section{ Simple modules over Leavitt path algebras}

In this section, we shall indicate the methods of constructing simple modules
over Leavitt path algebras by graphical methods.

As noted in \cite{AAS}, every element $a$ of a Leavitt path algebra $L_{K}(E)
$ over a graph $E$ can be written in the form $a=%
%TCIMACRO{\dsum \limits_{i=1}^{n}}%
%BeginExpansion
{\displaystyle\sum\limits_{i=1}^{n}}
%EndExpansion
\alpha_{i}\beta_{i}^{\ast}$ and that the map $%
%TCIMACRO{\dsum \limits_{i=1}^{n}}%
%BeginExpansion
{\displaystyle\sum\limits_{i=1}^{n}}
%EndExpansion
\alpha_{i}\beta_{i}^{\ast}\longrightarrow%
%TCIMACRO{\dsum \limits_{i=1}^{n}}%
%BeginExpansion
{\displaystyle\sum\limits_{i=1}^{n}}
%EndExpansion
\beta_{i}^{{}}\alpha_{i}^{\ast}$ induces an isomorphism $L_{K}%
(E)\longrightarrow(L_{K}(E))^{op}$. Consequently, $L_{K}(E)$ is left-right
symmetric. So in this and the next section, we shall only be stating results
on left ideals and left modules over $L_{K}(E)$. The corresponding results on
right ideals and right modules hold by symmetry.

\begin{definition}
A \ vertex $v$ is called a\textit{\ }\textbf{Laurent vertex }if $T_{E}(v)$
consists of the set of all vertices on a single path $\gamma=\mu c$ where
$\mu$ is a path without bifurcations starting at $v$ and $c$ is a \ cycle
without exits based on a vertex $u=r(\mu)$.
\end{definition}

An easy example of a Laurent vertex is the vertex $v$ in the following graph:%

\[%
\begin{array}
[c]{ccccccccccc}
&  &  &  &  &  & \bullet & \longrightarrow & \bullet &  & \\
&  &  &  &  & \nearrow &  &  &  & \searrow & \\
\bullet_{v} & \longrightarrow & \bullet & \longrightarrow & \bullet &  &  &  &
&  & \bullet\\
&  &  &  &  & \nwarrow &  &  &  & \swarrow & \\
&  &  &  &  &  & \bullet & \longleftarrow & \bullet &  &
\end{array}
\]
The next theorem gives conditions under which a vertex in the graph $E$
generates a simple left ideal/graded simple left ideal of $L_{K}(E)$.

\begin{theorem}
(\cite{AAS}, \cite{HR}) Let $E$ be an arbitrary graph and let $v$ be a vertex. Then

(a) The left ideal $L_{K}(E)v$ is a simple/minimal left ideal of $L_{K}(E)$ if
and only if $v$ is a line point;

(b) The left ideal $L_{K}(E)v$ is a graded simple/minimal left ideal of
$L_{K}(E)$ if and only if $v$ is either a line point or a Laurent vertex.
\end{theorem}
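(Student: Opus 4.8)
The plan is to prove both (a) and (b) by analyzing the structure of the left ideal $L_K(E)v$ through the (graded) endomorphism ring $v L_K(E) v$, using the standard principle that $L v$ is a simple (resp. graded simple) left module if and only if $v L v$ is a division ring (resp. a graded division ring, meaning every nonzero homogeneous element is invertible). This reduces the whole statement to computing $v L v$ in the various cases, which can be done combinatorially. First I would establish the general identity that $v L v$ is spanned as a $K$-space by elements of the form $\mu \nu^\ast$ where $\mu,\nu$ are paths with $s(\mu)=s(\nu)=v$ and $r(\mu)=r(\nu)$; this follows from rewriting arbitrary monomials $\alpha\beta^\ast$ and applying the CK-1 and CK-2 relations together with $v\alpha\beta^\ast v$ forcing the source conditions.

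For part (a), suppose first that $v$ is a line point. Then $T_E(v)$ has no bifurcations and no cycle, so for each pair of paths $\mu,\nu$ starting at $v$ with common range, one of them is an initial segment of the other; combined with $\mu^\ast\mu' = $ (the remaining tail) or $0$, a direct computation shows $vLv = Kv$, which is a division ring, hence $Lv$ is simple. Conversely, if $v$ is not a line point, there is a vertex $w \in T_E(v)$ that is either a bifurcation vertex or lies on a cycle. In the bifurcation case, choosing a path $\mu$ from $v$ to $w$ and two distinct edges $e,f$ out of $w$, the element $\mu e e^\ast \mu^\ast$ is a nonzero idempotent in $vLv$ that is not equal to $v$ (since $\mu e e^\ast \mu^\ast + \mu f f^\ast \mu^\ast + \cdots \leq v$ with the pieces nonzero), so $vLv$ is not a division ring; in the cycle case one produces a nonzero non-invertible element similarly (e.g. $\mu(c - w)\mu^\ast$-type elements, or just note $vLv$ contains a copy of $K[x]$ or a matrix corner), so $Lv$ is not simple. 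This direction is already contained in the cited references in \cite{AAS}, so I would only sketch it.

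For part (b), the new content is the Laurent vertex case and the converse. If $v$ is a Laurent vertex, so $T_E(v)$ is the vertex set of $\gamma = \mu c$ with $c$ a cycle of length $t$ based at $u = r(\mu)$ having no exit, then the same spanning computation shows that $vLv$ is spanned by the elements $\mu c^k \mu^\ast$ for $k \in \mathbb{Z}$ (negative powers coming from $\mu (c^\ast)^{|k|} \mu^\ast$, using that $c$ has no exit so $c^\ast c = u$ collapses cleanly); this gives a graded isomorphism $vLv \cong K[x^t, x^{-t}]$ where $x = \mu c \mu^\ast$ carries degree $t$, and this is a graded division ring (the only homogeneous elements are scalar multiples of $x^{kt}$, all invertible), so $Lv$ is graded simple. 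The converse is the main obstacle: I must show that if $Lv$ is graded simple, i.e. $vLv$ is a graded division ring, then $v$ is a line point or a Laurent vertex. Here one again locates, inside $T_E(v)$, any bifurcation or cycle. A bifurcation at $w$ produces homogeneous idempotents $\mu e e^\ast\mu^\ast \in vLv$ of degree $0$ that are proper (not $v$), contradicting that every nonzero homogeneous element is invertible in the graded sense — so no bifurcation exists in $T_E(v)$. Hence $T_E(v)$ lies on a single line, possibly terminating in a cycle $c$; if that cycle had an exit, the exit would be a bifurcation vertex, already excluded, so $c$ has no exit, and $v$ is either a line point (no cycle) or a Laurent vertex (with cycle $c$). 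The delicate points to get right are the degree-$0$ idempotent argument (ensuring the constructed idempotent is genuinely homogeneous of degree zero and genuinely proper) and the careful reduction of $vLv$ to $K[x^t,x^{-t}]$ using the exitless-cycle relations; both are routine once the spanning lemma is in hand.
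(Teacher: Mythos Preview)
The paper is a survey and does not supply its own proof of this theorem; it merely records the statement with attributions to \cite{AAS} and \cite{HR}. There is therefore no in-paper argument to compare against.

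That said, your approach is sound and is essentially the standard one in the cited literature. The reduction to the corner $vL_K(E)v$ via the classical fact that, for an idempotent $e$ in a ring with local units, $Re$ is a minimal left ideal if and only if $eRe$ is a division ring (and the graded analogue) is exactly the right organizing principle, and your spanning description of $vL_K(E)v$ by monomials $\mu\nu^\ast$ with $s(\mu)=s(\nu)=v$, $r(\mu)=r(\nu)$ is the correct combinatorial input.

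Two places deserve a little more care. First, in the converse of (a), your treatment of the ``cycle'' case is too telegraphic: the cleanest way to finish is to note that either some vertex of $T_E(v)$ bifurcates (handled), or no vertex of $T_E(v)$ bifurcates, in which case $v$ is a Laurent vertex and $vL_K(E)v\cong K[x^{t},x^{-t}]$ is a commutative Laurent polynomial ring, hence not a field; the vague reference to ``$\mu(c-w)\mu^\ast$-type elements'' should be replaced by this. Second, in the converse of (b) you should state explicitly why the homogeneous idempotent $\mu ee^\ast\mu^\ast$ is proper: it is orthogonal to the nonzero idempotent $\mu ff^\ast\mu^\ast$, so if it equalled the identity $v$ of $vL_K(E)v$ one would get $\mu ff^\ast\mu^\ast=0$, a contradiction. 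With these two clarifications the argument is complete.
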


Next we shall describe the general methodology used by Chen (\cite{C}) \ and
extended in (\cite{HR}, \cite{R-3}) to construct left simple and graded simple
modules over $L_{K}(E)$ by using special vertices or cycles in the graph $E$.

I) \textbf{Definition of the module }$A_{u}$: Let $u$ be a vertex in a graph
$E$ which is either a sink or an infinite emitter. Let $A_{u}$ be the
$K$-vector space having as a basis the set $B=\{p:p$ is a path in $E$ with
$r(p)=u\}$. We make $A$ a left $L_{K}(E)$-module as follows: Define, for each
vertex $v$ and each edge $e$ in $E$, linear transformations $P_{v},S_{e}$ and
$S_{e^{\ast}}$ on $A$ by defining their actions on the basis $B$ are as follows:

For all $p\in B$,

(I) $\ \ P_{v}(p)=\left\{
\begin{array}
[c]{c}%
p\text{, if }v=s(p)\\
0\text{, otherwise}%
\end{array}
\right.  $

(II) $\ S_{e}(p)=\left\{
\begin{array}
[c]{c}%
ep\text{, if }r(e)=s(p)\\
0\text{, \qquad otherwise}%
\end{array}
\right.  $

(III) $S_{e^{\ast}}(p)=\left\{
\begin{array}
[c]{c}%
p^{\prime}\text{, if }p=ep^{\prime}\\
0\text{, otherwise}%
\end{array}
\right.  $

(IV) $S_{e^{\ast}}(u)=0$.

The endomorphisms $\{P_{v},S_{e},S_{e^{\ast}}:v\in E^{0},e\in E^{1}\}$ satisfy
the defining relations (1) - (4) of the Leavitt path algebra $L_{K}(E)$. This
induces an algebra homomorphism $\phi$ from $L_{K}(E)$ to $End_{K}(A_{u})$
mapping $v$ to $P_{v}$, $e$ to $S_{e}$ and $e^{\ast}$ to $S_{e^{\ast}}$ . Then
$A_{u}$ can be made a left module over $L_{K}(E)$ via the homomorphism $\phi$.
We denote this $L_{K}(E)$-module operation on $A_{u}$ by $\cdot$.

\begin{theorem}
(\cite{C}, \cite{R-3}) If the vertex $u$ is either a sink or an infinite
emitter, then $A_{u}$ is a simple left $L_{K}(E)$-module.
\end{theorem}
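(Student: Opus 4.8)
The plan is to show directly that $A_u$ has no proper nonzero $L_K(E)$-submodule. First I would record two structural facts about the action $\cdot$ that follow immediately from the definitions (I)--(IV). Fact one: for a path $p\in B$ with $s(p)=v$, we have $v\cdot p = p$, and if $p=e_1e_2\cdots e_n$ then $e_n^{\ast}e_{n-1}^{\ast}\cdots e_1^{\ast}\cdot p = u$ (peeling off the edges one at a time via (III), landing on the length-$0$ path $u$); thus $p^{\ast}\cdot p = u$. Fact two: for any path $p\in B$, $p\cdot u = p$ (here we use that $r(p)=u=s(u)$ as a length-$0$ path, so repeated application of (II) builds $p$ up from $u$). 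These two facts are the engine of the argument.

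Now let $N$ be a nonzero submodule of $A_u$ and pick $0\neq x\in N$. Write $x=\sum_{i=1}^m k_i p_i$ with the $p_i\in B$ distinct paths and $k_i\in K^{\times}$. The key step is to extract the single basis vector $u$ from $x$ by acting with suitable ghost paths. Applying $p_1^{\ast}$ (meaning the endomorphism $S_{p_1^{\ast}}$, a composite of the $S_{e^{\ast}}$) to $x$: by (III), $p_1^{\ast}\cdot p_i$ is either a truncation of $p_i$ (if $p_1$ is an initial segment of $p_i$) or $0$ (otherwise); in particular $p_1^{\ast}\cdot p_1 = u$. One must check that the surviving terms do not cancel $u$; since the $p_i$ are distinct, the only $p_i$ with $p_1^{\ast}\cdot p_i = u$ is $p_1$ itself, and every other surviving $p_1^{\ast}\cdot p_i$ is a nonempty path, hence lies in a different basis element than $u$. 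So $p_1^{\ast}\cdot x = k_1 u + (\text{a combination of paths of positive length})$. To finish the extraction I would act by $u$ on the left: $u\cdot(p_1^{\ast}\cdot x) = k_1 u$, because $u\cdot q = 0$ for any path $q$ of positive length (its source is not $u$, as $u$ is a sink or infinite emitter — wait, $s(q)$ could be $u$; better: use that applying $S_{e^{\ast}}$-type maps or, more simply, note $u$ acting on a path $q$ gives $q$ only if $s(q)=u$). The clean way is: among the positive-length paths appearing, further apply ghost paths to kill them or simply choose $p_1$ to be a path of \emph{maximal} length among $p_1,\dots,p_m$, so that $p_1^{\ast}\cdot p_i = 0$ for all $i\neq p_1$ with $p_i$ not properly extending $p_1$, and since $p_1$ is maximal no $p_i$ properly extends it; hence $p_1^{\ast}\cdot x = k_1 u$ outright. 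Thus $u\in N$.

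Finally, once $u\in N$, Fact two gives $p = p\cdot u \in N$ for every $p\in B$, so $N$ contains the whole basis $B$ and therefore $N = A_u$. Hence $A_u$ is simple. It remains only to observe $A_u\neq 0$, which is clear since the length-$0$ path $u$ is a basis element, and that $A_u$ is unital in the appropriate sense (generated by the $P_v$), which follows from the homomorphism $\phi$ being the structure map.

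The one genuine subtlety — the part I would be most careful about — is the non-cancellation argument when passing from $x$ to $k_1 u$: one needs the right choice of which path $p_i$ to hit with a ghost path (taking $p_i$ of maximal length is the trick) and a clear statement of when $S_{p^{\ast}}\cdot p'$ vanishes versus truncates, keyed to whether $p$ is an initial subpath of $p'$. The CK-2 relations and the case split between $u$ a sink and $u$ an infinite emitter play no role in this direction of the argument; they matter only for the companion results (e.g. identifying $A_u$ up to isomorphism, or the graded refinements), so I would not invoke them here. Everything else is a routine unwinding of (I)--(IV) together with the associativity already guaranteed by the stated fact that $\{P_v,S_e,S_{e^{\ast}}\}$ satisfies relations (1)--(4).
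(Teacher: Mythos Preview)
The paper does not supply its own proof of this theorem; it is quoted with attribution to Chen \cite{C} and Rangaswamy \cite{R-3}. Your argument is correct and is essentially the standard one from those sources: given a nonzero element of a submodule, choose a basis path $p_1$ of maximal length in its support, apply the ghost path $p_1^{\ast}$ to isolate (a scalar multiple of) $u$, and then rebuild every basis element by acting with real paths via (II). The exposition meanders before landing on the maximal-length trick---the earlier attempt to kill the positive-length survivors by acting with $u$ does not work in general, since such paths may well have source $u$ when $u$ is an infinite emitter---but once you commit to the maximal-length choice the computation $p_1^{\ast}\cdot x = k_1 u$ is clean and the rest follows. Your closing remark is also accurate: the hypothesis that $u$ is a sink or an infinite emitter is used only to guarantee that the operators $P_v,S_e,S_{e^{\ast}}$ satisfy CK-2 (the sole obstruction is CK-2 at $u$ applied to the basis vector $u$ itself), so that $\phi$ exists and $A_u$ is genuinely an $L_K(E)$-module; it plays no further role in the simplicity argument.
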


If the vertex $u$ lies on a cycle without exits, then in the Definition of
$A_{u}$, define the basis $B=\{pq^{\ast}:p,q$ path in $E$ with $r(q^{\ast
})=s(q)=u\}$. We then get the following result.

\begin{theorem}
(\cite{HR}) If a vertex $u\in E$ lies on a cycle without exits, then $A_{u}$
is a graded simple left $L_{K}(E)$-module graded isomorphic to the graded
minimal left ideal $L_{K}(E)u$ and $A_{u}$ is not a simple left $L_{K}(E)$-module.
\end{theorem}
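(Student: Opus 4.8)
The plan is to identify $A_{u}$ with the graded left ideal $L:=L_{K}(E)u$ and then read off every assertion from properties of $L$. After replacing the cycle through $u$ by a suitable cyclic rotation, we may assume $u=s(c)$, so that $c$ is a cycle without exits based at $u$ and $T_{E}(u)$ is exactly the set of vertices on $c$; in particular $u$ is a Laurent vertex (with trivial $\mu$). Since $c$ has no exit, each vertex on $c$ emits a single edge $f$, the CK-$2$ relation there reads $ff^{\ast}=s(f)$, and every path $q$ with $s(q)=u$ is an initial segment of $ccc\cdots$; these facts let one reduce every element $pq^{\ast}$ of the spanning set $B$ to a normal form.

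First I would introduce $\theta\colon A_{u}\to L$ by sending a reduced basis vector $pq^{\ast}$ to the corresponding element $pq^{\ast}\in L_{K}(E)$. This lands in $L$ because $r(q^{\ast})=s(q)=u$ forces $(pq^{\ast})u=pq^{\ast}$. Comparing the adapted action formulas (I)--(IV) on $A_{u}$ with left multiplication in $L_{K}(E)$ shows that $\theta$ is a homomorphism of left $L_{K}(E)$-modules, and it is graded since $\deg(pq^{\ast})=|p|-|q|$ both in $A_{u}$ and in the $\mathbb{Z}$-grading of $L_{K}(E)$. Surjectivity is easy: a spanning element $\alpha\beta^{\ast}u$ of $L$ is nonzero only if $s(\beta)=u$, and then $\alpha\beta^{\ast}$ reduces into $B$. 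Injectivity is the only point that uses the inner structure of $L_{K}(E)$: it says the reduced monomials $pq^{\ast}$ with $s(q)=u$ stay $K$-linearly independent inside $L_{K}(E)$, which is part of the normal-form description of Leavitt path algebras (see \cite{AAS}). I expect making the normal form precise and verifying this independence to be the main technical obstacle; once $\theta$ is known to be a graded isomorphism the rest is formal.

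Granting $A_{u}\cong_{gr}L=L_{K}(E)u$, the graded simplicity of $A_{u}$ is precisely that of $L_{K}(E)u$, which holds because $u$ is a Laurent vertex, by the theorem describing when $L_{K}(E)v$ is a graded minimal left ideal. Finally, to see that $A_{u}$ (equivalently $L_{K}(E)u$) is \emph{not} simple, I would compute the corner $uL_{K}(E)u$: using $ff^{\ast}=s(f)$ one sees it is spanned by the powers $c^{k}$ ($k\in\mathbb{Z}$, with $c^{-k}$ meaning $(c^{\ast})^{k}$), so $uL_{K}(E)u\cong K[t,t^{-1}]$ with $c\leftrightarrow t$ --- a graded field but not a field. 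Here $c-u\in uL_{K}(E)u$ corresponds to the non-unit $t-1$, so $L_{K}(E)(c-u)$ is a nonzero \emph{proper} left submodule of $L_{K}(E)u$: if it were all of $L_{K}(E)u$ we would have $u=x(c-u)$ for some $x$, and since $u(c-u)=c-u=(c-u)u$ we could replace $x$ by $uxu\in uL_{K}(E)u$, making $t-1$ invertible in $K[t,t^{-1}]$, a contradiction. Transporting this submodule through $\theta^{-1}$ gives a proper nonzero submodule of $A_{u}$, so $A_{u}$ is not a simple $L_{K}(E)$-module; note this submodule is necessarily non-graded, in harmony with $A_{u}$ being graded simple.
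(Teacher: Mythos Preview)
Your proof is correct. Note that the survey itself gives no argument for this statement: the Remark immediately following it says the proof ``is just the proof of Theorem~3.5(1) of \cite{HR}'', where the module is built from an algebraic branching system (and denoted $N_{vc}$) rather than via the explicit basis $B$ used here. So there is no detailed proof in the paper to compare against line by line.

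That said, your approach is exactly the natural one for the survey's presentation of $A_{u}$, and it matches in outline what \cite{HR} establishes: you produce a graded isomorphism $\theta\colon A_{u}\to L_{K}(E)u$, then invoke the Laurent-vertex criterion (the theorem on $L_{K}(E)v$ stated just before in the survey) for graded minimality of $L_{K}(E)u$, and finally exhibit the proper nonzero submodule $L_{K}(E)(c-u)$ via the corner identification $uL_{K}(E)u\cong K[t,t^{-1}]$. The only place requiring genuine care, as you correctly flag, is the normal form: one must reduce $pq^{\ast}$ using $ee^{\ast}=s(e)$ along the exit-free cycle so that distinct basis vectors of $A_{u}$ map to $K$-linearly independent elements of $L_{K}(E)$; once that bookkeeping is done, injectivity of $\theta$ follows from the standard basis description of $L_{K}(E)$ and the remaining steps go through exactly as you wrote.
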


\begin{remark}
In \cite{HR}, the module $A_{u}$ is defined by using an algebraic branching
system and is denoted as $N_{vc}$. Here we have defined the module $A_{u}$
differently, but the proof of the above theorem is just the proof of Theorem
3.5(1) of \cite{HR}.
\end{remark}

With a slight modification of the definition of $A_{u}$, Chen \cite{C} shows
one more way of constructing simple modules by using the infinite paths that
are tail-equivalent to a fixed infinite path in $E$. Recall, two infinite
paths $p=e_{1}\cdot\cdot\cdot e_{r}\cdot\cdot\cdot$ and $q=f_{1}\cdot
\cdot\cdot f_{s}\cdot\cdot\cdot$ are said to be \textbf{tail-equivalent} if
there exist fixed positive integers $m,n$ such that $e_{n+k}=f_{m+k}$ for all
$k\geq1$. Let $[p]$ denote the tail-equivalence class of all infinite paths
equivalent to $p$. Let $A_{[p]}$ denote the $K$-vector space having $[p]$ as a
basis. As in the definition of $A_{u}$, for each vertex $v$ and each edge $e$
in $E$, define the linear transformations $P_{v},S_{e}$ and $S_{e^{\ast}}$ on
$A$ by defining their actions on the basis $[p]$ satisfying the conditions
(I),(II),(III), but not (IV) above. \ As before, they satisfy the defining
relations of a Leavitt path algebra and thus induce a homomorphism
$\varphi:L_{K}(E)\rightarrow A_{[p]}$. The vector space $A_{[p]}$ then becomes
a left $L_{K}(E)$-module via the map $\varphi$.

\begin{theorem}
\label{Chen}(\cite{C}) The module $A_{[p]}$ is a simple left $L_{K}(E)$-module
and for two infinite paths $p,q$, $A_{[p]}\cong A_{[q]}$ if and only if
$[p]=[q]$.
\end{theorem}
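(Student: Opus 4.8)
The plan is to prove the two assertions separately: first that each $A_{[p]}$ is simple, then that the isomorphism type of $A_{[p]}$ is exactly the tail-equivalence class $[p]$. For simplicity of $A_{[p]}$, the key observation is that the basis vectors of $A_{[p]}$ are the infinite paths in the class $[p]$, and one can move between any two of them using the $P_v$, $S_e$, $S_{e^\ast}$ operators coming from finite initial segments. Concretely, I would first show that for any nonzero $x \in A_{[p]}$, the submodule $L_K(E)\cdot x$ contains one of the basis vectors $q \in [p]$: write $x = \sum_{i=1}^m k_i q_i$ with distinct $q_i \in [p]$ and all $k_i \neq 0$, and argue by induction on $m$. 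Since the $q_i$ are distinct but tail-equivalent, for large enough $N$ the segments $\tau^{\le N}(q_i)$ are not all equal; pick an edge $e$ with $e = $ first edge of $\tau^{>N-1}(q_1)$ say, so that applying a suitable product of ghost-edge operators $S_{e_N^\ast}\cdots S_{e_1^\ast}$ (truncating the first $N$ edges of $q_1$) kills those $q_i$ whose initial segment differs from that of $q_1$ while sending $q_1$ to the shifted path $\tau^{>N}(q_1) \in [p]$. This strictly reduces $m$, and the base case $m=1$ is immediate since $q \mapsto$ (shift) and $q \mapsto e q$ let us reach any element of $[p]$ from any single basis vector. Thus $L_K(E)\cdot x$ contains a basis vector, hence contains all of $[p]$ (by applying $S_e$'s and $S_{e^\ast}$'s to traverse the class), so $L_K(E)\cdot x = A_{[p]}$, proving simplicity.

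For the isomorphism statement, the direction $[p]=[q] \implies A_{[p]} \cong A_{[q]}$ is essentially definitional: if $p$ and $q$ are tail-equivalent then $[p]=[q]$ as sets, and the two module structures are literally built from the same basis and the same operator formulas, so $A_{[p]} = A_{[q]}$. For the converse, suppose $\theta: A_{[p]} \xrightarrow{\sim} A_{[q]}$ is an $L_K(E)$-module isomorphism. The idea is to extract from $\theta$ an identity of tail-equivalence classes by looking at how vertex idempotents act. Fix a basis vector $p' = e_1 e_2 \cdots \in [p]$ and set $v = s(p')$; then $P_v p' = p'$, so $P_{v}\cdot \theta(p') = \theta(p') \neq 0$, which forces $\theta(p')$ to have a basis-expansion supported on paths in $[q]$ starting at $v$. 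Now use the CK-2 / orthogonality structure: applying the operators $S_{e_1^\ast}, S_{e_1^\ast e_2^\ast}, \ldots$ to $p'$ produces the successive shifts $\tau^{>n}(p')$, and since $\theta$ intertwines these operators, $\theta$ sends the "coherent sequence of shifts of $p'$" to a coherent sequence of shifts in $A_{[q]}$; chasing which basis vector survives at each stage pins down a path in $[q]$ that is tail-equivalent to $p'$, giving $[p] = [q]$.

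The main obstacle is the converse isomorphism direction — specifically, controlling the image $\theta(p')$, which a priori is a finite linear combination of several basis paths in $[q]$ rather than a single one. The clean way around this is to note that $p'$ generates $A_{[p]}$ (by simplicity, or directly), and that $p'$ is characterized inside $A_{[p]}$ up to scalar by an idempotency-type property: there is a vertex $v = s(p')$ with $P_v p' = p'$, and moreover $p'$ is the unique (up to scalar) element $y$ with $P_v y = y$ such that the descending chain of images under successive ghost operators $S_{e_1^\ast}, S_{e_1^\ast e_2^\ast}, \dots$ never terminates in $0$ — infinite paths are exactly the "non-eventually-vanishing" vectors under left-shifting, whereas a finite path $\mu$ with $r(\mu)=u$ satisfies $\mu^\ast \cdot \mu = u$ and is eventually annihilated. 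Transporting this intrinsic description through $\theta$ shows $\theta(p')$ must again be a scalar multiple of a single infinite path $q' \in [q]$, and then matching up the edges of $p'$ and $q'$ via the edge operators $S_e$ forces $p'$ and $q'$ to be tail-equivalent, completing the argument. (This is the argument of \cite{C}; here I have only sketched the module-theoretic skeleton.)
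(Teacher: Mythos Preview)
The paper is a survey and does not actually prove this theorem; it simply quotes the result from Chen \cite{C}. So there is no ``paper's own proof'' to compare against, and I will assess your proposal on its own merits.

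Your argument for simplicity of $A_{[p]}$ is correct and is essentially Chen's argument: given a nonzero element $x=\sum_{i=1}^{m}k_{i}q_{i}$, apply a suitable ghost monomial $(\tau^{\leq N}(q_{1}))^{\ast}$ to strictly reduce the number of terms, then use edge and ghost-edge operators to move between basis vectors within the tail-equivalence class. This part is fine.

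The gap is in your treatment of the converse implication $A_{[p]}\cong A_{[q]}\Rightarrow [p]=[q]$. Your proposed fix for the ``main obstacle'' is the claim that $p'$ is, up to scalar, the \emph{unique} element $y\in A_{[p]}$ with $P_{v}y=y$ such that $(\tau^{\leq n}(p'))^{\ast}\cdot y\neq 0$ for all $n$. This uniqueness is false: if $p''\in[p]$ is any other infinite path with $s(p'')=v$, then $y=p'+p''$ also satisfies $P_{v}y=y$, and for large $n$ one has $(\tau^{\leq n}(p'))^{\ast}\cdot p''=0$ while $(\tau^{\leq n}(p'))^{\ast}\cdot p'=\tau^{>n}(p')\neq 0$, so the chain never vanishes for $y$ either. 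Hence you cannot conclude that $\theta(p')$ is a scalar multiple of a single basis path.

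The correct argument is closer to the ``survival'' idea you sketched just before that paragraph, made precise by pigeonhole. Write $\theta(p')=\sum_{i=1}^{m}k_{i}q_{i}$ with distinct $q_{i}\in[q]$ and $k_{i}\neq 0$. For every $n\geq 1$ we have $(\tau^{\leq n}(p'))^{\ast}\cdot\theta(p')=\theta(\tau^{>n}(p'))\neq 0$, so for each $n$ at least one $q_{i}$ has $\tau^{\leq n}(p')$ as an initial segment. Since there are only finitely many $q_{i}$, some fixed $q_{i_{0}}$ has $\tau^{\leq n}(p')$ as an initial segment for infinitely many $n$, hence (by nestedness of initial segments) for all $n$. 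This forces $q_{i_{0}}=p'$, so $p'\in[q]$ and $[p]=[q]$. Replacing your uniqueness claim with this pigeonhole step closes the gap.
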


It can be shown (see \cite{R-3}) that the simple modules $A_{u},A_{v}$ and
$A_{[p]}$ corresponding respectively to a sink $u$, an infinite emitter $v$
\ and an infinite path $p$, are all non-isomorphic.

A special infinite path is the so called a \textbf{rational infinite path}
induced by a simple closed path (and in particular, a cycle) $c$. This is the
infinite path $ccc\cdot\cdot\cdot$ . We denote this path by $c^{\infty}$. We
shall be using the corresponding simple $L_{K}(E)$-module $A_{c^{\infty}}$ subsequently.

\section{Leavitt path algebras with simple modules having special properties}

We shall describe when all the simple modules over a Leavitt path algebra are
flat or injective or finitely presented or graded etc.

An open problem, raised by Ramamuthi (\cite{Ram}) some forty years ago, asks
whether a non-commutative ring $R$ with $1$ is von Neumann regular if all the
simple left $R$-modules are flat. Using a more general approach of Steinberg
algebras, Ambily, Hazrat and Li (\cite{AHL}) obtain the following theorem
which shows that Ramamurthi's question has an affirmative answer in the case
of Leavitt path algebras.

\begin{theorem}
(\cite{AHL}) Let $E$ be an arbitrary graph. Then every simple left $L_{K}%
(E)$-module is flat if and only if $L_{K}(E)$ is von Neumann regular.
\end{theorem}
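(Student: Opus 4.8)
The plan is to prove the theorem in both directions, with the bulk of the work in the forward implication. For the easy direction, suppose $L_{K}(E)$ is von Neumann regular. By the theorem of Ambily, Hazrat and Li already quoted (or by the theorem characterizing von Neumann regular Leavitt path algebras), $E$ is acyclic, and in fact over a von Neumann regular ring every module is flat; in particular every simple left module is flat. So the reverse implication is immediate and requires no graph-theoretic input.

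For the forward direction, assume every simple left $L_{K}(E)$-module is flat and aim to show $E$ is acyclic, which by the earlier theorem is equivalent to von Neumann regularity of $L_{K}(E)$. I would argue by contraposition: suppose $E$ contains a cycle $c$ based at a vertex $u$. The plan is to produce a simple left $L_{K}(E)$-module that fails to be flat. The natural candidate is the Chen simple module $A_{c^{\infty}}$ attached to the rational infinite path $c^{\infty}=ccc\cdots$, introduced at the end of Section 6. The key idea is to use the standard homological characterization: a left module $S$ is flat if and only if for every finitely generated right ideal $I$ the natural map $I\otimes_{L} S\to S$ is injective, equivalently $\mathrm{Tor}^{L}_{1}(L/I, S)=0$ for all such $I$. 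So I would look for a concrete right ideal — built from the cycle $c$ — witnessing a nonzero $\mathrm{Tor}_1$ against $A_{c^{\infty}}$.

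The concrete computation I expect to do is the following. Let $u$ be a vertex on the cycle $c$ and consider the element $u-c$ of $L_{K}(E)$ (or its ghost-adjusted analogue $u-c^{\ast}$ on the right-module side, chosen so that it annihilates the relevant generator of $A_{c^{\infty}}$). One checks that left multiplication by $u - c^{\ast}$ on the principal right ideal $uL$ is injective (the cycle $c$ has no zero-divisor behaviour of the wrong kind inside the "Laurent-like" corner $uLu\cong K[x,x^{-1}]$ when $c$ has no exit, and more generally one works in $uLu$), so one gets a short exact sequence of right modules $0\to uL\xrightarrow{\,\cdot(u-c^{\ast})\,} uL\to uL/(u-c^{\ast})uL\to 0$. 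Tensoring on the right with $A_{c^{\infty}}$ and using that $uL\otimes_{L} A_{c^{\infty}}\cong uA_{c^{\infty}}=u\cdot A_{c^{\infty}}$, the induced map is multiplication by $u-c^{\ast}$ on $u\cdot A_{c^{\infty}}$. But in $A_{c^{\infty}}$ the generator corresponding to the tail-class $[c^{\infty}]$ (realized via the path $c$ acting) satisfies $c^{\ast}\cdot[c^{\infty}]=[c^{\infty}]$, so $(u-c^{\ast})$ kills it; hence the map has nontrivial kernel, giving $\mathrm{Tor}^{L}_{1}(uL/(u-c^{\ast})uL,\, A_{c^{\infty}})\neq 0$, so $A_{c^{\infty}}$ is not flat. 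This contradicts the hypothesis, so $E$ must be acyclic.

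The main obstacle, and the place requiring the most care, is the bookkeeping in the Tor computation: one must verify (i) that $u-c^{\ast}$ (in the appropriate one-sided form) is genuinely a non-zero-divisor on the relevant projective right ideal, so that the displayed sequence really is a resolution and $\mathrm{Tor}_1$ is computed as a kernel; (ii) that the functor $uL\otimes_{L}(-)$, or rather $(-)\otimes_{L}A_{c^{\infty}}$, carries the map to the expected multiplication operator — this is where the explicit Chen-module action formulas (I)--(III) from Section 6 get used; and (iii) that the generator $[c^{\infty}]$ is fixed by the relevant ghost action so that the kernel is nonzero. A secondary subtlety is uniformizing the left/right conventions: since $L_{K}(E)\cong L_{K}(E)^{op}$ one is free to test flatness of a left module against right ideals, but one should state this reduction cleanly. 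Once these points are nailed down, combining with the acyclic-equals-von-Neumann-regular theorem from Section 4 closes the argument.
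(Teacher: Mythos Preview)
The survey does not actually prove this theorem; it simply cites \cite{AHL} and remarks that the argument there proceeds ``using a more general approach of Steinberg algebras.'' So there is no detailed proof in the paper to compare against, only a pointer to an external, more general framework.

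Your direct approach is correct and more elementary than the Steinberg-algebra route. The reverse implication is standard (every module over a von Neumann regular ring is flat). For the forward implication, your plan---show that the Chen simple module $A_{c^{\infty}}$ is not flat whenever $E$ contains a cycle $c$, via an explicit $\mathrm{Tor}_1$ computation against $uL/(u-c^{\ast})uL$---goes through. The point you flag as the main obstacle, injectivity of left multiplication by $u-c^{\ast}$ on $uL$, does \emph{not} require the no-exit hypothesis or any reduction to the corner $uLu$; it follows in one line from the $\mathbb{Z}$-grading of $L$. Indeed, if $x\in uL$ satisfies $(u-c^{\ast})x=0$ then $x=c^{\ast}x$; writing $x=\sum x_m$ in homogeneous components and comparing the top nonzero degree (noting $\deg c^{\ast}=-|c|<0$) forces $x=0$. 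With this in hand, $0\to uL\to uL\to uL/(u-c^{\ast})uL\to 0$ is a projective resolution of right $L$-modules, tensoring with $A_{c^{\infty}}$ yields the map $n\mapsto (u-c^{\ast})n$ on $uA_{c^{\infty}}\cong uL\otimes_L A_{c^{\infty}}$, and the basis element $[c^{\infty}]$ lies in the kernel since $c^{\ast}\cdot[c^{\infty}]=[c^{\infty}]$. Hence $\mathrm{Tor}_1\neq 0$, $A_{c^{\infty}}$ is not flat, and the acyclic $\Leftrightarrow$ von Neumann regular theorem from Section~4 finishes the proof.

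In short: your route is valid and strictly more self-contained than what the paper points to; the only refinement needed is to replace the tentative remark about $uLu$ with the grading argument above, which handles all cycles uniformly.
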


Next we consider the case when $L_{K}(E)$ is a left V-ring, that is, when
every simple left $L_{K}(E)$-module is injective. Kaplansky showed that if $R$
is a commutative ring, then every simple $R$-module is injective if and only
if $R$ is von Neumann regular. A natural question is under what conditions
every simple left $L_{K}(E)$-module is injective. It was shown in \cite{HRS-2}
that, in this case, $L_{K}(E)$ becomes a weakly regular ring. However,
recently Abrams, Mantese and Tonolo (\cite{AMT-1}) showed that, if $c$ is a
cycle in a graph $E$, then the corresponding simple left $L_{K}(E)$-module
$A_{c^{\infty}}$ satisfies $Ext_{L_{K}(E)}^{1}(A_{c^{\infty}},$ $A_{c^{\infty
}})\neq0$. This implies that the module $A_{c^{\infty}}$ cannot be an
injective module. So, if every simple left $L_{K}(E)$-module is injective,
then necessarily $E$ contains no cycles. Then by (\cite{AR}) $L_{K}(E)$ must
be von Neumann regular. Thus we obtain the following new result and its corollary.

\begin{theorem}
Let $E$ \ be an arbitrary graph. If every simple left $L_{K}(E)$-module is
injective, then $L_{K}(E)$ is a von Neumann regular ring.
\end{theorem}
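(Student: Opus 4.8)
The plan is to deduce this statement directly from results already assembled in the excerpt. The key observation is that the hypothesis ``every simple left $L_K(E)$-module is injective'' rules out the presence of any cycle in $E$. Indeed, suppose for contradiction that $E$ contains a cycle $c$. Then by the result of Abrams, Mantese and Tonolo (\cite{AMT-1}) recalled just above, the Chen simple module $A_{c^{\infty}}$ associated to the rational infinite path $c^{\infty}=ccc\cdots$ satisfies $Ext^1_{L_K(E)}(A_{c^{\infty}},A_{c^{\infty}})\neq 0$. On the other hand, by Theorem~\ref{Chen} the module $A_{c^{\infty}}$ is a genuine simple left $L_K(E)$-module, hence by hypothesis it is injective. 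But a nonzero $Ext^1$ into $A_{c^{\infty}}$ contradicts injectivity of $A_{c^{\infty}}$, since $Ext^1(-,M)=0$ for every module $M$ that is injective. This contradiction shows $E$ must be acyclic.

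Once we know $E$ is acyclic, we simply invoke the theorem of \cite{AR} quoted earlier in the excerpt: for an arbitrary graph $E$, the graph $E$ being acyclic is equivalent to $L_K(E)$ being von Neumann regular. Hence $L_K(E)$ is von Neumann regular, which is exactly the desired conclusion. So the proof is essentially a two-line syllogism: injectivity of all simple modules $\Rightarrow$ no cycles (via the $Ext^1$ computation) $\Rightarrow$ von Neumann regular (via \cite{AR}).

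The only point that needs a word of care is making sure the cited $Ext^1$ fact applies: one must check that $c^{\infty}$ is indeed an infinite path in $E$ in Chen's sense whenever $c$ is a cycle — which is immediate, since a cycle is in particular a closed path with no repeated vertices, so concatenating it infinitely yields a legitimate infinite path, and $[c^{\infty}]$ is its tail-equivalence class. With that in hand, Theorem~\ref{Chen} guarantees $A_{c^{\infty}}$ is simple and the hypothesis forces it to be injective.

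I do not anticipate a genuine obstacle here; the statement is explicitly flagged in the surrounding text as ``the following new result,'' and the preceding paragraph has already laid out precisely this argument in prose. The mild subtlety — if any — is purely expository: one should state clearly that ``injective $M$'' implies ``$Ext^1(N,M)=0$ for all $N$'' (the standard characterization of injectivity via vanishing of $Ext^1$), so that the contradiction with $Ext^1_{L_K(E)}(A_{c^{\infty}},A_{c^{\infty}})\neq 0$ is visibly valid. No delicate computation, no case analysis, and no appeal to anything beyond the two cited theorems is required.
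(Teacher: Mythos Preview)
Your proposal is correct and follows essentially the same route as the paper: the paragraph immediately preceding the theorem already spells out exactly this argument --- use the $Ext^{1}$ non-vanishing from \cite{AMT-1} to force $E$ to be acyclic, then invoke \cite{AR}. Your write-up just makes the contrapositive structure and the ``injective $\Rightarrow Ext^{1}(-,M)=0$'' step explicit, which is fine.
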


Conversely, if $L_{K}(E)$ is a von Neumann regular ring then is the graph $E$
contains no cycles (\cite{AR}) and, if $E$ is further a finite graph, then
$L_{K}(E)$ is a direct sum of finitely many matrix rings of finite order over
$K$ (Thorem 2.6.17, \cite{AAS}). In this case, $L_{K}(E)$ is a direct sum of
left/right simple modules and hence every simple left/right $L_{K}(E)$-module
is injective. This leads to the following corollary.

\begin{corollary}
Let $E$ be a finite graph. Then every simple left/right $L_{K}(E)$-module is
injective if and only if $L_{K}(E)$ is a von Neumann regular ring.
\end{corollary}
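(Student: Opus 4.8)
The plan is to treat the two implications separately, using results already recorded above.

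For the forward implication, suppose every simple left $L_K(E)$-module is injective; the hypothesis on right modules is not needed here. This is exactly the hypothesis of the Theorem stated immediately before the corollary, so $L_K(E)$ is von Neumann regular. The first step is thus simply to invoke that theorem.

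For the converse, suppose $L_K(E)$ is von Neumann regular. By the theorem of \cite{AR} recalled above, the graph $E$ is acyclic; since $E$ is moreover finite, the structure theorem for Leavitt path algebras of finite acyclic graphs (Theorem 2.6.17, \cite{AAS}) gives $L_K(E)\cong\bigoplus_{i=1}^{t}M_{n_i}(K)$ for finitely many positive integers $n_1,\dots,n_t$. Hence $L_K(E)$ is a semisimple artinian ring. Over a semisimple artinian ring every left module decomposes as a direct sum of simple submodules and is a direct summand of a free module, so every left module — in particular every simple left $L_K(E)$-module — is injective. By the left-right symmetry of $L_K(E)$ noted earlier (the isomorphism $L_K(E)\cong(L_K(E))^{op}$), the corresponding statement for right modules follows as well. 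This is the argument already sketched in the discussion preceding the corollary; I would merely assemble it into a formal proof.

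I expect no genuine obstacle: the corollary packages the preceding theorem together with the classical fact that a finite acyclic graph yields a semisimple artinian Leavitt path algebra. The only points meriting explicit mention are (i) why being semisimple artinian forces injectivity of all simple modules — the standard fact that over a semisimple ring every module is simultaneously projective and injective — and (ii) that the reduction from the left-module statement to the right-module statement is handled by the opposite-ring isomorphism, so that no separate argument on the right is required.
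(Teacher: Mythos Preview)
Your proposal is correct and follows essentially the same argument as the paper: the forward implication invokes the preceding theorem, and the converse uses \cite{AR} to get that $E$ is acyclic, then the structure theorem (Theorem 2.6.17 of \cite{AAS}) to conclude $L_{K}(E)$ is semisimple artinian, whence all simple modules are injective. The only cosmetic difference is that you route the right-module statement through the $L_{K}(E)\cong L_{K}(E)^{op}$ isomorphism, whereas the paper simply observes that a finite direct sum of matrix rings over $K$ is left and right semisimple.
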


When $E$ is an arbitrary graph, it is an open question whether the von Neumann
regularity of $L_{K}(E)$ implies that every simple left/right $L_{K}%
(E)$-module is injective.

Next we consider Leavitt path algebras $L_{K}(E)$ whose simple modules are all
finitely presented. When $E$ is a finite graph, $L_{K}(E)$ possesses a number
of interesting properties as noted in the following theorem

\begin{theorem}
(\cite{A-1R}) For any finite graph $E$, the following properties of $\ $the
Leavitt path algebra $L:=L_{K}(E)$ are equivalent:

(i) \ \ Every simple left $L$-module \ is finitely presented;

(ii) \ No two cycles in $E$ have a common vertex;

(iii) There is a one-to-one correspondence between isomorphism classes of
simple $L$-modules and primitive ideals of $L$;

(iv) The Gelfand-Kirillov dimension of $L$ is finite.
\end{theorem}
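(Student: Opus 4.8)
The plan is to establish the cycle of equivalences by working through a chain (i) $\Rightarrow$ (iv) $\Rightarrow$ (ii) $\Rightarrow$ (iii) $\Rightarrow$ (i), or a suitable rearrangement, using the explicit description of simple modules from Section 6 together with the GK-dimension computations recalled in the Preliminaries. For a finite graph $E$, the key structural fact is that the simple left $L_K(E)$-modules come in three flavors: the modules $A_u$ attached to line points / sinks, the modules $A_u$ attached to infinite emitters (vacuous here since $E$ is finite, so every vertex is either a sink or a regular vertex), and the Chen modules $A_{[p]}$ attached to tail-equivalence classes of infinite paths, with $A_{c^\infty}$ the rational ones coming from cycles. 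The first reduction I would make is: in a finite graph, every infinite path is eventually of the form entering a maximal strongly connected component and cycling there, so the distinct Chen modules are controlled by the cycles of $E$ up to the tail-equivalence relation.

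For (ii) $\Rightarrow$ (i): if no two cycles share a vertex, then each maximal tail-equivalence class of irrational infinite paths still behaves tamely, and one shows each simple module $A_u$ or $A_{c^\infty}$ has a finite free resolution by exploiting the idempotent-at-a-vertex generators — concretely, $A_u \cong L_K(E)v / (\text{relations})$ where the relations are indexed by the edges/bifurcations encountered along the tree $T_E(v)$, which under condition (ii) forms a finite "broom" with at most one cycle at its tip; this yields a length-$\le 1$ or length-$\le 2$ presentation. For (i) $\Rightarrow$ (iv): I would use the theorem of Abrams–Mantese–Tonolo recalled in Section 6 — namely $\mathrm{Ext}^1_{L_K(E)}(A_{c^\infty}, A_{c^\infty}) \ne 0$ for any cycle $c$ — to argue that if two cycles share a vertex, one can build a simple module (a non-rational Chen module, or $A_{c^\infty}$ with the shared vertex causing an infinite-dimensional obstruction) that fails to be finitely presented; contrapositively, finite presentation of all simple modules forces no two cycles to meet, and then the quotient graphs decompose into finitely many comet-type and acyclic pieces, each contributing GK-dimension $\le 1$ via the isomorphisms (5), (7), so $\mathrm{GK\text{-}dim}(L) < \infty$. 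The equivalence of (iv) with (ii) is essentially the statement that GK-dimension of $L_K(E)$ for finite $E$ equals (roughly) the length of the longest chain of cycles connected by paths, which is finite iff no two cycles share a vertex, i.e. iff there are no "nested" or "overlapping" cyclic configurations forcing exponential growth; this is the polynomial-versus-exponential growth dichotomy, and I would invoke the GK-dimension formula for Leavitt path algebras of finite graphs (the "chain of cycles" formula).

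The equivalence with (iii) — bijection between iso-classes of simple modules and primitive ideals — I would handle by comparing the count on both sides. Primitive ideals of $L_K(E)$ correspond (by the known classification) to certain maximal tails / hereditary saturated subsets together with a choice tied to cycles, and a simple module determines its annihilator (a primitive ideal), giving a surjection from iso-classes of simples to primitive ideals; the map fails to be injective precisely when some primitive ideal is the annihilator of two non-isomorphic simples, which happens exactly when a cycle has an exit into another cycle — again the "two cycles sharing a vertex" obstruction in disguise, after passing to quotients. So (iii) is equivalent to (ii) by matching the combinatorics of maximal tails with cycles.

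The main obstacle I anticipate is the careful bookkeeping in the implication (i) $\Rightarrow$ (ii): one must produce, from a pair of cycles through a common vertex $v$, an explicit simple module that is provably \emph{not} finitely presented, and the cleanest route is via the $\mathrm{Ext}^1 \ne 0$ computation of Abrams–Mantese–Tonolo combined with a dimension-counting argument showing that a finitely presented simple module over a (locally) Noetherian-like corner would force $\mathrm{Ext}^1$ to vanish — but $L_K(E)$ is generally not Noetherian, so one instead argues that finite presentation of $A_{c^\infty}$ together with the self-extension forces a contradiction with the structure of $L_K(E)e$ at the shared vertex $e = v$. Making this contradiction precise — i.e., showing that the corner $vL_K(E)v$ when $v$ lies on two cycles is "too big" to support a finitely presented simple quotient — is where the real work lies, and it is exactly the content of the cited paper \cite{A-1R}.
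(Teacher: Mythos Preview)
The survey does not prove this theorem; it merely states the result and cites \cite{A-1R}. There is thus no proof in the paper to compare your proposal against, so I comment on the proposal itself.

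The equivalence (ii) $\Leftrightarrow$ (iv) via the polynomial-versus-exponential growth dichotomy is correct and is precisely the result of \cite{AAJZ}, already invoked in the survey at Theorem~\ref{NC-finiteCase}. Your approach to (iii) $\Leftrightarrow$ (ii) by matching primitive ideals against isomorphism classes of simples is also on the right track.

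The genuine gap is in (i) $\Rightarrow$ (ii). Your appeal to $\mathrm{Ext}^1_{L}(A_{c^\infty}, A_{c^\infty}) \neq 0$ from \cite{AMT-1} is a wrong turn: nonvanishing self-extensions say nothing about finite presentation (finitely presented modules routinely have nonzero $\mathrm{Ext}^1$), and you appear to sense this when you retreat from the argument in your final paragraph. The correct obstruction --- which you mention only parenthetically --- comes from the \emph{irrational} Chen modules. When two cycles $g,h$ share a vertex $v$, the survey itself (in the paragraph preceding Theorem~\ref{FIRT}) builds uncountably many pairwise non-tail-equivalent infinite paths by interleaving $g$ and $h$ along subsets of $\mathbb{P}$; for an aperiodic such path $p$, the simple module $A_{[p]}$ is a quotient $Lv/N$ in which $N$ is \emph{not} finitely generated, essentially because any finite set of relations constrains only a finite initial segment of $p$ and cannot pin down the aperiodic tail. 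That, and not an $\mathrm{Ext}^1$ computation, is what forces the failure of finite presentation, and establishing it rigorously is indeed the substance of \cite{A-1R}.
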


The preceding theorem has been generalized in (\cite{R-4}) to the case when
$E$ is an arbitrary graph with several similar equivalent conditions.

\bigskip

It is easy to observe that every simple left module over a Leavitt path
algebra $L:=L_{K}(E)$ is of the form $Lv/N$ for some vertex $v$ and a maximal
left submodule $N$ of $Lv$. A natural question is, given a vertex $u$, can we
estimate the number\ $\kappa_{u}$ of distinct maximal left $L$-submodules $M$
of $Lu$ such that $Lu/M$ is isomorphic to $Lv/N$? In \cite{R-4} it is shown
that $\kappa_{u}\leq|uLv\backslash N|$ \ and consequently the cardinality of
the set of \ all such simple modules corresponding to various vertices is
$\leq%
%TCIMACRO{\dsum \limits_{u\in E^{0}}}%
%BeginExpansion
{\displaystyle\sum\limits_{u\in E^{0}}}
%EndExpansion
|uLv\backslash N|$ and thus is $\leq|L|$.

More generally, one may try to estimate the number of non-isomorphic simple
modules over a Leavitt path algebra. In this connection, observe the
following: Suppose a graph $E$ contains two cycles $g,h$ which share a common
vertex $v$, such as the two cycles in the following graph.%
\[%
\begin{array}
[c]{ccccccccccc}
&  & \bullet &  &  &  & \bullet & \longleftarrow & \bullet &  & \\
& \nearrow &  & \searrow &  & \swarrow &  &  &  & \nwarrow & \\
\bullet_{v} &  &  &  & \bullet &  &  &  &  &  & \bullet\\
& \nwarrow &  & \swarrow &  & \searrow &  &  &  & \nearrow & \\
&  & \bullet &  &  &  & \bullet & \longrightarrow & \bullet &  &
\end{array}
\]
We then wish to show that there are uncountably many non-isomorphic simple
modules over the corresponding $L_{K}(E)$. By Theorem \ref{Chen}, we need only
to produce uncountably many non-equivalent infinite paths in $E$. With that in
mind, consider the infinite rational path $p=ggg\cdot\cdot\cdot$ which, for
convenience, we write as $p=g_{1}g_{2}g_{3}\cdot\cdot\cdot$ $\ $\ indexed by
the set $\mathbb{P}$ of positive integers, where $g_{i}=g$ for all $i$. Now,
for every subset $S$ of $\mathbb{P}$, define an infinite path $p_{S}$ by
replacing $g_{i}$ by $h$ if and only if $i\in S$. Observe that this gives rise
to uncountably many distinct infinite paths. From the definition of
equivalence of paths, it can be derived that, given an infinite path $q$ there
are at most countably many infinite paths that are equivalent to $q$. From
this one can then establish that there are uncountably many non-isomorphic
simple left $L_{K}(E)$-modules.

Leavitt path algebras having only finitely many non-isomorphic simple modules
are characterized in the next theorem. Recall, a ring $R$ is called left/right
semi-artinian if every non-zero left/right $R$-module contains a simple submodule.

\begin{theorem}
(\cite{A-2R})\label{FIRT} Let $E$ be an arbitrary graph and $K$ be any field.
Then the following are equivalent for the Leavitt path algebra $L=L_{K}(E)$:

(i) \ $L$ has at most finitely many non-isomorphic simple left/right $L$-modules;

(ii) $L$ is a left and right semi-artinian von Neumann regular ring with
finitely many two-sided ideals which form a chain under set inclusion;

(iii) The graph $E$ is acyclic and there is a finite ascending chain of
hereditary saturated subsets $\{0\}\subsetneqq H_{1}\subsetneqq\cdot\cdot
\cdot\subsetneqq H_{n}=E^{0}$ such that for each $i<n$, $H_{i+1}\backslash
H_{i}$ is the hereditary satusrated closure of the set of all the line points
in $E\backslash H_{i}$.
\end{theorem}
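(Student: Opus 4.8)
The plan is to establish the cycle
(iii)$\Rightarrow$(ii)$\Rightarrow$(i)$\Rightarrow$(iii), exploiting at each
stage the known structure theory of Leavitt path algebras and, crucially, the
quotient construction by hereditary saturated subsets, which gives
$L_K(E)/I(H)\cong L_K(E/H)$ for a hereditary saturated $H$. Throughout I would
use that the simple modules over $L_K(E)$ are, by the results of the previous
section, exactly the Chen modules $A_u$ (for sinks and infinite emitters $u$)
together with the $A_{[p]}$ (for tail-equivalence classes of infinite paths),
and that two of these are isomorphic precisely under the stated identifications;
in particular, by the discussion preceding the theorem, the presence of any two
cycles sharing a vertex, or even any single cycle, forces a large supply of
pairwise non-isomorphic $A_{[p]}$'s, so (i) already forces the graph to be
acyclic (via Theorem \ref{Chen} and the uncountability argument spelled out in
the text, plus the observation that a lone cycle $c$ still yields the cycle
module $A_{c^\infty}$ which is pairwise non-isomorphic to the $A_u$; combined
with Theorem~\ref{Condn K}/the von Neumann regularity criterion $(\cite{AR})$,
acyclicity of $E$ is equivalent to $L_K(E)$ being von Neumann regular).

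First, for (iii)$\Rightarrow$(ii): given the chain
$\{0\}\subsetneqq H_1\subsetneqq\cdots\subsetneqq H_n=E^0$ with each layer
$H_{i+1}\setminus H_i$ the hereditary saturated closure of the line points of
$E\setminus H_i$, I would argue by induction on $n$ that $L_K(E)$ is
semi-artinian. The base case uses the fact (stated earlier) that a vertex $v$
generates a minimal left ideal iff $v$ is a line point, so the ideal generated
by all line points is a sum of simple left ideals, i.e.\ the socle; the content
of the hypothesis is that modding out by this socle strips away \emph{all}
line points, and one repeats. Concretely, $I(H_1)=\mathrm{Soc}(L_K(E))$ is an
essential-in-each-layer socle, $L_K(E)/I(H_1)\cong L_K(E/H_1)$ again satisfies
(iii) with a shorter chain, and finiteness of $n$ gives a finite socle series
terminating at $L_K(E)$, which is exactly left (and, by the left-right symmetry
noted in Section 6, right) semi-artinianness. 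Von Neumann regularity is immediate
from acyclicity of $E$ and $(\cite{AR})$. Finally, the lattice of two-sided
ideals of $L_K(E)$ for an acyclic, and in fact this kind of ``layered'', graph
is order-isomorphic to the lattice of hereditary saturated subsets (all ideals
are graded here since Condition (K) holds vacuously when $E$ is acyclic,
cf.\ Theorem~\ref{Condn K}), and the chain condition in (iii) forces that
lattice to be a finite chain.

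For (ii)$\Rightarrow$(i): in a left semi-artinian von Neumann regular ring the
simple modules are controlled by the socle series of the ring, and since the
ideal lattice is a finite chain, each socle-series layer
$\mathrm{Soc}_{i+1}/\mathrm{Soc}_i$ is a homogeneous semisimple module — being a
layer in a ring whose ideals form a chain, it is isotypic — so contributes
exactly one isomorphism class of simple module; with finitely many layers there
are only finitely many simple modules up to isomorphism, and left-right symmetry
handles the right-module count. For (i)$\Rightarrow$(iii): as noted above, (i)
first forces $E$ acyclic; then I would show that the union of all line points is
a nonempty hereditary saturated set $H_1$ (nonemptiness because an acyclic graph
with finitely many simple modules cannot have an infinite descending ``line''
creating infinitely many non-isomorphic $A_u$'s — here is where the finiteness
hypothesis does real work), pass to $E/H_1$, and iterate; the process must
terminate after finitely many steps, for otherwise the successive quotients
produce infinitely many non-isomorphic simple modules (each new layer of line
points gives a genuinely new simple module not arising from the previous
quotient), contradicting (i). That termination/non-emptiness step — proving that
under (i) every acyclic quotient that is not already zero \emph{does} possess a
line point, and that the layers are genuinely ``new'' — is the main obstacle,
and I expect it to hinge on a careful analysis of which vertices of $E\setminus H_i$
emit only into previously-removed vertices, i.e.\ on the interplay between the
saturation operation and the absence of bifurcations/cycles, together with a
counting argument bounding the number of Chen modules of each type by the number
of layers.
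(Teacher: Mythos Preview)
The paper, being a survey, does not prove this theorem; it cites \cite{A-2R} and only supplies the preparatory observation that two cycles sharing a vertex force uncountably many non-isomorphic $A_{[p]}$. So there is no in-paper proof to compare against beyond that fragment, and your outline has to be judged on its own merits.

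Your scheme (iii)$\Rightarrow$(ii)$\Rightarrow$(i)$\Rightarrow$(iii) and the socle-series and isotypic-layer arguments for the first two implications are sound in outline and match the approach of \cite{A-2R}. One point to tighten in (iii)$\Rightarrow$(ii): you assert that the two-sided ideals form a finite chain, but condition (iii) only \emph{exhibits} one chain of hereditary saturated subsets; you still owe the verification that \emph{every} graded ideal (equivalently, every admissible pair $(H,S)$, not merely every hereditary saturated $H$) coincides with one of the $I(H_i)$. This is routine but not automatic.

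There is, however, a genuine gap in (i)$\Rightarrow$(iii), at the step where you force $E$ to be acyclic. You assert that the simple $L$-modules are ``exactly'' the $A_u$ and the $A_{[p]}$; Section~5 of the paper constructs these but nowhere claims they exhaust the simples, and in fact they do not. More damagingly, your treatment of a \emph{single} cycle $c$ produces only the one module $A_{c^\infty}$, which is perfectly compatible with having finitely many isomorphism classes. What actually rules out cycles is that every cycle $c$ yields an \emph{infinite} family of pairwise non-isomorphic simple $L$-modules: if $c$ has no exit and is based at $v$, then $vLv\cong K[x,x^{-1}]$ and its simples are indexed by the infinitely many irreducible $p(x)\in K[x]$ with $p(0)\neq 0$; if $c$ has exits, one either passes to a quotient graph in which $c$ has none, or invokes the twisted Chen modules of \cite{C}, \cite{A-1R}, \cite{R-3}. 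Your $A_{c^\infty}$ is merely the member of this family corresponding to $p(x)=x-1$. Without this ingredient the implication (i)$\Rightarrow$``$E$ acyclic'' fails, and the rest of (i)$\Rightarrow$(iii) cannot start.
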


\section{One-sided Ideals in a Leavitt Path Algebra}

In this section, we shall describe some of the interesting properties of
one-sided ideals of a Leavitt path algebra.

Using a deep theorem of G. Bergman \cite{B}, Ara and Goodearl proved the
following result that Leavitt path algebras are hereditary.

\begin{theorem}
\label{hereditary}(\cite{AG}) If $\ E$ is an arbitrary graph, then every
left/right ideal of $L_{K}(E)$ is a projective left/right $L_{K}(E)$-module.
\end{theorem}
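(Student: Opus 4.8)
The statement to prove is that for an arbitrary graph $E$, every left (equivalently right) ideal of $L_K(E)$ is projective as a left (right) $L_K(E)$-module; that is, $L_K(E)$ is left (and right) hereditary. The strategy I would follow is the one attributed to Ara and Goodearl: realize $L_K(E)$ as a directed union (or, better, a suitable colimit) of Leavitt path algebras of finite graphs, or of well-understood hereditary subalgebras, and invoke a theorem of G.\ Bergman that controls the global dimension of such colimits. Concretely, Bergman's work on coproducts and on the behaviour of projectivity under certain colimit constructions gives a criterion under which a directed union of hereditary rings (with compatible local units) is again hereditary; the first task is to put $L_K(E)$ into exactly the shape that criterion requires.

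First I would reduce to the finite-graph case. Since $E$ is arbitrary, write $E$ as the directed union of its finite complete subgraphs, or rather use the standard fact (from \cite{AAS}) that $L_K(E) = \varinjlim L_K(E_i)$ where each $E_i$ is a finite graph and the connecting maps are (non-unital) inclusions compatible with the sets of local units (the vertex idempotents). So it suffices to (a) prove the finite-graph case and (b) show projectivity of one-sided ideals passes through this particular colimit. For step (a), when $E$ is finite $L_K(E)$ is a unital algebra, and one can present it as an iterated universal localization / coproduct of hereditary pieces — finite acyclic graphs give finite products of matrix algebras $M_n(K)$ (equation (4)), which are semisimple hence hereditary, and cycles contribute $M_n(K[x,x^{-1}])$ factors, which are hereditary since $K[x,x^{-1}]$ is a PID; general finite graphs are handled by Bergman's coproduct theorem, which says the global dimension of a coproduct of $K$-algebras (amalgamated over a semisimple base) is the supremum of the global dimensions of the factors. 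For step (b), one uses that a module over $\varinjlim R_i$ is projective if it is "locally projective" over the system together with flatness of the transition maps; here each $L_K(E_i) \hookrightarrow L_K(E_j)$ is in fact a nice ring epimorphism-free extension and the needed flatness is available because $L_K(E_j)$ is free (hence flat) as a right $L_K(E_i)$-module on an explicit basis of paths.

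The main obstacle I anticipate is step (b): colimits do not in general preserve projectivity of modules, so the argument cannot be a soft limit argument and must use the specific structure of the transition maps in the Leavitt path algebra system — namely that they are inclusions of hereditary subrings over which the larger ring is flat with a path basis, which is precisely the setting Bergman's theorem is designed for. A secondary subtlety is the passage from "global dimension $\le 1$" (which controls submodules of \emph{free} modules) to "every one-sided ideal is projective"; this is immediate in the unital case but for the non-unital $L_K(E)$ (graph with infinitely many vertices) one must work with the ring-with-local-units notion of hereditary, checking that every left ideal is a directed union of left ideals of the form $L_K(E)e$ for idempotents $e$, each of which sits inside the unital corner $eL_K(E)e$ — but this is routine once the finite-graph case and the colimit step are in hand. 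Finally, left–right symmetry is free from the isomorphism $L_K(E)\cong L_K(E)^{op}$ recorded earlier in the paper, so it suffices to treat left ideals throughout.
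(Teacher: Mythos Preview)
The paper does not actually supply a proof of this theorem; it merely records that Ara and Goodearl established it ``using a deep theorem of G.~Bergman'' and cites \cite{AG} and \cite{B}. So there is no in-paper argument to match line by line, but your overall architecture --- Bergman's machinery for the finite case, then a directed-colimit passage --- is indeed the Ara--Goodearl route the paper is alluding to.

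That said, your step~(a) has a real gap. You propose to handle an arbitrary finite graph by invoking the structure results (4) and (6) (matrix rings over $K$ and over $K[x,x^{-1}]$) and then applying Bergman's coproduct theorem to those pieces. But equations (4)--(8) of the paper describe $L_K(E)$ only when $E$ is acyclic, a comet, or more generally when \emph{no cycle has an exit}; they say nothing about a general finite graph. For instance, if $E$ is a single vertex with two loops, then $L_K(E)=L(1,2)$ is purely infinite simple and is certainly not a coproduct (over any semisimple base) of matrix rings over $K$ or $K[x,x^{-1}]$. The correct way to feed a finite $L_K(E)$ into Bergman's theorem is different: one starts from the semisimple algebra $KE^{0}=\prod_{v\in E^{0}}K$ and builds $L_K(E)$ by iteratively \emph{adjoining universal morphisms} between finitely generated projectives (the edges) and then \emph{universally inverting} the maps that encode the CK-relations. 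Bergman's results in \cite{B} (together with the standard facts on universal localization) guarantee that each of these steps preserves global dimension $\le 1$. Without this correction your argument only covers the ``no cycle has an exit'' case, which is exactly the easy situation already pinned down by Theorem~\ref{NC-finiteCase}.

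Your step~(b) and the remark on left--right symmetry via $L_K(E)\cong L_K(E)^{op}$ are on target; the flatness of the transition maps in the direct system is the right lever, and the local-units reduction you sketch is the standard way to interpret ``hereditary'' for non-unital $L_K(E)$.
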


A von Neumann regular ring $R$ has the characterizing property that every
finitely generated one-sided ideal of $R$ is a principal ideal generated by an
idempotent. Four years ago, it was shown in \cite{R-2} that every finitely
generated two-sided ideal of a Leavitt path algebra is a principal ideal.
Recently, Abrams, Mantese and Tonolo have proved that this interesting
property holds for one sided ideals too, as indicated in the next theorem.

\begin{theorem}
\label{Bezout} (\cite{AMT-2}) Let $E$ be an arbitrary graph. Then
$L:=L_{K}(E)$ is a B\'{e}zout ring, that is, every finitely generated
one-sided ideal is a principal ideal.
\end{theorem}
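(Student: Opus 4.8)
The plan is to reduce the statement to the two-sided case combined with the hereditary property (Theorem \ref{hereditary}), and then exploit the special structure of Leavitt path algebras. First I would recall the standard fact that a ring $R$ with local units is a B\'ezout ring (on one side) precisely when every $2$-generated one-sided ideal is principal; an easy induction then reduces the problem to showing that $La + Lb$ is principal for arbitrary $a,b \in L$. Since $L$ has local units, after multiplying by a suitable vertex-sum idempotent $u$ we may assume $a = ua$, $b = ub$, so that $La + Lb = L(a,b)$ sits inside the finitely generated projective module $Lu$.

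The key step is to handle the case where $a$ and $b$ are \emph{homogeneous} (or, more generally, to first treat the graded ideal generated by $a,b$). Here I would invoke the graded structure theory: the graded two-sided ideal generated by a homogeneous element is determined by a hereditary saturated subset together with a subset of breaking vertices, and for such ideals one has very explicit generators. In fact, using that finitely generated \emph{graded} ideals are already known to be principal (the two-sided result of \cite{R-2} in the graded setting, or the structure of graded ideals via admissible pairs $(H,S)$), one shows $L a + L b$ contains an idempotent $e$ with $L a + L b = L e$ whenever $a,b$ are homogeneous. The passage from homogeneous to arbitrary elements is where the argument of \cite{AMT-2} does real work: one writes $a = \sum a_n$, $b = \sum b_n$ as finite sums of homogeneous components and argues by a "leading term" induction on the span of degrees, using at each stage that Theorem \ref{hereditary} guarantees the relevant submodules are projective (hence direct summands, since $L$ is hereditary) so that one can split off a principal summand and induct on the complement, which has strictly smaller "degree-spread."

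The main obstacle, as in \cite{AMT-2}, is precisely this last reduction: controlling how the $\mathbb{Z}$-grading interacts with arbitrary (non-homogeneous) elements. Naively, $La + Lb$ need not be a graded ideal, so one cannot just apply the graded machinery directly; the trick is to show that $La+Lb$ is nonetheless "close enough" to graded --- e.g. that its intersection with a suitable finitely generated graded piece is principal and that the difference is captured by finitely many lower-degree corrections. A clean way to organize this is to work inside the finite-dimensional-looking corner $uLu$ for an appropriate vertex idempotent $u$ and use that $uLu$ is a finitely presented algebra, or alternatively to use that every one-sided ideal is projective (Theorem \ref{hereditary}) to write $La+Lb$ as a direct summand of $Lu$ and count its "rank" via the $K_0$-monoid, showing it must be cyclic. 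I would expect to spend most of the effort verifying that the inductive step terminates, i.e. that the degree-spread genuinely decreases and that no infinite descent is hidden in the local-units bookkeeping.
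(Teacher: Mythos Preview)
The paper you are working from is a survey and contains no proof of this theorem; it simply quotes the result from \cite{AMT-2}. There is therefore nothing in the paper to compare your argument against, and your proposal has to be judged on its own.

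On that score, the outline has some correct ingredients but a genuine gap at the decisive step. In the homogeneous case your conclusion is right but your justification is not: the structure of graded \emph{two-sided} ideals via admissible pairs $(H,S)$ and the principal two-sided result of \cite{R-2} say nothing about graded \emph{left} ideals. What actually handles that case is Hazrat's theorem (cited in the survey as \cite{H}) that $L$ is graded von Neumann regular; from this it follows directly that every finitely generated graded left ideal is generated by a homogeneous idempotent. So the step is salvageable, but not for the reason you give.

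The real problem is the passage from homogeneous to arbitrary elements. Your ``degree-spread'' induction is not an argument: splitting off a direct summand of $La+Lb$ inside $Lu$ produces a complementary \emph{module}, not a left ideal of $L$, and nothing forces its generators to have smaller degree-spread than the ones you started with. The remark that the corner $uLu$ is ``finite-dimensional-looking'' or ``finitely presented'' is not correct for general $E$ and does not lead anywhere. Your alternative suggestion --- read off cyclicity of $La+Lb$ from its class in the $K_0$-monoid --- is closer in spirit to a viable strategy, but as stated it is circular: you would need to prove that every finitely generated projective left $L$-module embedding in some $Lu$ is already cyclic, and that is essentially the content of the theorem. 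You have correctly located where the difficulty lives, but nothing in the proposal actually overcomes it.
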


Thus if $La$ and $Lb$ are two principal left ideals of $L$, then, being
finitely generated, $La+Lb=Lc$, a principal left ideal. So the sum of any two
principal left ideals of $L$ is again a principal left ideal. What about their
intersection? Should the intersection of two principal left ideals of $L$ be
again a principal left ideal? This is answered in the next theorem. Since this
result is new, we outline its proof which is straight forward.

\begin{theorem}
Let $E$ be an arbitrary graph. Then both the sum and the intersection of two
principal left ideal of $L:=L_{K}(E)$ are again principal left ideals. Thus
the principal left ideals of $L$ form a sublattice of the lattice of all left
ideals of $L$.
\end{theorem}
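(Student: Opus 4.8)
The plan is to verify the two closure properties separately, using the Bézout property (Theorem~\ref{Bezout}) and the hereditary property (Theorem~\ref{hereditary}) as the main inputs. The sum is immediate: if $La$ and $Lb$ are principal left ideals, then $La+Lb$ is generated by the finite set $\{a,b\}$, hence finitely generated, so by Theorem~\ref{Bezout} it is principal. The substantive part is the intersection, and here the plan is to reduce ``principal'' to ``finitely generated'' so that Theorem~\ref{Bezout} can again be applied; the bridge for that reduction is projectivity.

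For the intersection $La\cap Lb$, first I would invoke Theorem~\ref{hereditary} to conclude that $La$, $Lb$ and $La+Lb$ are all projective left $L$-modules, and in particular $La+Lb$, being finitely generated (by $\{a,b\}$) and projective, is a finitely generated projective module. Next I would write down the short exact sequence coming from the sum and intersection,
\[
0\longrightarrow La\cap Lb\longrightarrow La\oplus Lb\longrightarrow La+Lb\longrightarrow 0,
\]
where the first map is $x\mapsto (x,-x)$ and the second is $(x,y)\mapsto x+y$. Since $La+Lb$ is projective, this sequence splits, giving $La\oplus Lb\cong (La+Lb)\oplus(La\cap Lb)$. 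As $La\oplus Lb$ is finitely generated (generated by $\{a,b\}$ as a module) and $La\cap Lb$ is a direct summand of it, $La\cap Lb$ is finitely generated as a left $L$-module. Being a finitely generated left ideal, it is principal by Theorem~\ref{Bezout}. Finally, combining the two computations, the principal left ideals of $L$ are closed under sum and intersection, hence form a sublattice of the lattice of all left ideals.

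The step I expect to need the most care is the passage from ``$La\cap Lb$ is a finitely generated $L$-module'' to ``$La\cap Lb$ is a finitely generated left ideal'': one must check that a set of module generators really does generate it as an ideal, which is automatic here because left ideals of $L$ and left $L$-submodules of $L$ are the same thing, so a module generating set is an ideal generating set. The only other subtlety is that $L$ may lack a unit (the excerpt stresses arbitrary graphs), so I should phrase ``finitely generated'' consistently as ``$L\mathcal{X}$ for a finite set $\mathcal{X}$'', and note that $La$ denotes the left ideal generated by $a$ in this sense, which is exactly how Theorem~\ref{Bezout} is stated. With that bookkeeping, the argument is the routine one sketched above, and the same proof applies to right ideals by the left--right symmetry of $L_K(E)$.
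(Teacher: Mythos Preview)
Your proof is correct and follows essentially the same route as the paper: handle the sum directly via the B\'{e}zout property, and for the intersection use the short exact sequence $0\to La\cap Lb\to La\oplus Lb\to La+Lb\to 0$, split it using projectivity of $La+Lb$ (Theorem~\ref{hereditary}), conclude that $La\cap Lb$ is a finitely generated direct summand, and then apply Theorem~\ref{Bezout}. Your additional remarks on the non-unital setting and on module versus ideal generators are careful bookkeeping that the paper leaves implicit.
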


\begin{proof}
Suppose $A,B$ are two principal left ideals of $L$. Consider the following
exact sequence where the map $\theta$ is the additive map $(a,b)\rightarrow
a+b$%
\[
0\longrightarrow K\longrightarrow A\oplus B\overset{\theta}{\longrightarrow
}A+B\longrightarrow0
\]
where $K=\{(x,-x):x\in A\cap B\}\cong A\cap B$. Now $A+B$ is a (finitely
generated) left ideal of $L_{K}(E)$. By Theorem \ref{hereditary}, it is a
projective module and hence the above exact sequence splits. Consequestly,
$A\cap B$ is isomorphic a direct summand of $A\oplus B$. Since $A\oplus B$ is
finitely generated, so is $A\cap B$. By Theorem \ref{Bezout}, $A\cap B$ is
then a principal left ideal.
\end{proof}

As we noted in Theorem \ref{Condn K}, every two-sided ideal of $L_{K}(E)$ is
graded if and only if the graph $E$ satisfies Condition (K). What happens if
every one-sided ideal of $L_{K}(E)$ is graded ? This is answered in the next
and the last theorem of this section.

\begin{theorem}
(\cite{HR}) Let $E$ be an arbitrary graph. Then the following properties are
equivalent for $L=L_{K}(E)$:

(i) \ \ Every left ideal of $L$ is a graded left ideal;

(ii) \ Every simple left $L$-module is a graded module;

(iii) The graph $E$ contains no cycles;

(iv) $L$ is a von Neumann regular ring.
\end{theorem}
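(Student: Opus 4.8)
The plan is to run the four conditions around a cycle of implications anchored on the already-cited equivalence (iii)$\Leftrightarrow$(iv), which is exactly the Abrams--Rangaswamy theorem (\cite{AR}) recalled earlier: $E$ is acyclic if and only if $L$ is von Neumann regular. It then remains to splice in (i) and (ii), and I would do this via (iii)$\Rightarrow$(i)$\Rightarrow$(ii)$\Rightarrow$(iii). The implication (i)$\Rightarrow$(ii) is immediate: every simple left $L$-module is of the form $L/M$ for a maximal left ideal $M$, and if (i) holds then $M$ is graded, whence $L/M$ is a graded module with its natural grading.

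For (ii)$\Rightarrow$(iii) I would argue contrapositively. Suppose $E$ contains a cycle $c$, based at a vertex $v$, and consider the Chen simple module $S=A_{c^{\infty}}$ attached to the rational infinite path $c^{\infty}=ccc\cdots$; it is simple by Theorem \ref{Chen}. I claim $S$ is not gradable, which falsifies (ii). Indeed, if $S=\bigoplus_{n}S_{n}$ were a compatible $\mathbb{Z}$-grading, write the basis vector $c^{\infty}$ as a \emph{finite} sum $c^{\infty}=\sum_{n}x_{n}$ of homogeneous components $x_{n}\in S_{n}$. From $v\cdot c^{\infty}=c^{\infty}$ and $\deg v=0$ we get $x_{n}\in v\cdot S$ for all $n$; from $c\cdot c^{\infty}=c^{\infty}$ and $\deg c=|c|>0$, comparing degree-$m$ parts yields $c\cdot x_{n}=x_{n+|c|}$ for all $n$; and since $c^{\ast}c=v$ acts as the identity on $v\cdot S$, left multiplication by $c$ is injective on $v\cdot S$, so $x_{n}\neq 0$ forces $x_{n+|c|}\neq 0$. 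As $c^{\infty}\neq 0$, some $x_{n_{0}}\neq 0$, and then $x_{n_{0}+k|c|}\neq 0$ for every $k\geq 0$, contradicting finiteness of the decomposition. (This refines the fact from \cite{AMT-1} that $Ext^{1}_{L}(A_{c^{\infty}},A_{c^{\infty}})\neq 0$, which already prevents $A_{c^{\infty}}$ from being injective.)

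The main obstacle is the remaining implication (iii)$\Rightarrow$(i): for acyclic $E$, every left ideal of $L$ must be shown graded. I would first reduce to the finitely generated case, since $L$ is von Neumann regular (by (iv)) every left ideal is a directed union of principal left ideals $Le$ with $e^{2}=e$, and a directed union of graded left ideals is graded; so it suffices that each such $Le$ be graded, equivalently that $e$ can be replaced, without changing $Le$, by a homogeneous idempotent. To do this I would pass to a finite complete subgraph $F$ of $E$ with $e\in L_{K}(F)$, invoke the graded decomposition of $L_{K}(F)$ into matrix algebras over $K$ with the length gradings recorded in equations (5) and (8), and analyze idempotents block by block there. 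The delicate point — where I expect the real work to be — is controlling the conjugating units that arise and checking these choices are compatible across the directed system of finite subgraphs; the heuristic is that acyclicity forbids any periodicity in the grading, and it is precisely this that should force every principal left ideal to be generated by a homogeneous idempotent. Together with (iii)$\Leftrightarrow$(iv), this closes the cycle.
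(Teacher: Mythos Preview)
Your scheme has a real gap at (iii)$\Rightarrow$(i). Under the reading you use --- that a ``graded left ideal'' is a left ideal which is a graded $L$-submodule of $L$, so that $L/M$ inherits the quotient grading --- this implication is simply false. Take $E$ to be the acyclic graph $u\overset{e}{\longrightarrow}v$; then $L\cong M_{2}(K)$ with the grading $(|v|,|e|)=(0,1)$ of (5), so $\deg e_{11}=\deg e_{22}=0$, $\deg e_{12}=-1$, $\deg e_{21}=1$. The principal left ideal $L(e_{11}+e_{12})=\bigl\{\bigl(\begin{smallmatrix}a&a\\c&c\end{smallmatrix}\bigr):a,c\in K\bigr\}$ is not homogeneous: the degree-$0$ component of $e_{11}+e_{12}$ is $e_{11}$, and $e_{11}$ is not of the required form. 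So (iii) holds while your version of (i) fails. Your unease about ``controlling the conjugating units'' in the block-matrix picture is exactly right --- there is no way to do it, because the claim itself is false.

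Consequently (i), as used in \cite{HR}, has to mean that every left ideal is \emph{gradable}: isomorphic as an $L$-module to a graded $L$-module. But with that reading your quick (i)$\Rightarrow$(ii) collapses, since a gradable maximal submodule $N\subset Lv$ need not sit inside $Lv$ as a graded submodule, and then $Lv/N$ acquires no obvious grading. (Note also that for arbitrary $E$ the algebra $L$ need not be unital, so simple modules are $Lv/N$ as in the paper, not $L/M$.) Your argument for (ii)$\Rightarrow$(iii) --- that $A_{c^{\infty}}$ admits no grading because $c\cdot c^{\infty}=c^{\infty}$ with $c^{*}c=v$ forces any homogeneous decomposition of $c^{\infty}$ to have infinite support --- is correct and is the substantive step; together with (iii)$\Leftrightarrow$(iv) from \cite{AR} this already yields (ii)$\Leftrightarrow$(iii)$\Leftrightarrow$(iv). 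What needs to be redone is the linkage of (i), under the gradable interpretation, to these three.
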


\end{document}